\theoremstyle{definition}
\newtheorem{thm}{Theorem}
\newtheorem{defn}{Definition}
\newtheorem{exa}{Example}
\newtheorem{rem}[thm]{\bf{Remark}}
\newcommand{\cX}{\underline{\bf X}}
\newcommand{\cQ}{\underline{\bf Q}}
\newcommand{\cC}{\underline{\bf C}}
\newcommand{\cY}{\underline{\bf Y}}
\newcommand{\cU}{\underline{\bf U}}
\newcommand{\cV}{\underline{\bf V}}
\newcommand{\cZ}{\underline{\bf Z}}
\newcommand{\cR}{\underline{\bf R}}
\newcommand{\cS}{\underline{\bf S}}
\newcommand{\cP}{\underline{\bf P}}
\newcommand{\cW}{\underline{\bf W}}
\newcommand{\cD}{\underline{\bf D}}
\newcommand{\cL}{\underline{\bf L}}
\newcommand{\R}{{\bf R}}
\newcommand{\X}{{\bf X}}
\newcommand{\Y}{{\bf Y}}
\newcommand{\I}{{\bf I}}
\newcommand{\C}{{\bf C}}
\newcommand{\Q}{{\bf Q}}
\newcommand{\Z}{{\bf Z}}
\newcommand{\U}{{\bf U}}
\newcommand{\V}{{\bf V}}
\journal{Elsevier}
\begin{document}

\begin{frontmatter}

\title{A note on generalized tensor CUR approximation for tensor pairs and tensor triplets based on the tubal product}

 \author[label1]{Salman Ahmadi-Asl}
 \affiliation[label1]{organization={Innopolis University, Innopolis, Russia,\,s.ahmadiasl@innopolis.ru},
             }
\author[label2]{Naeim Rezaeian}
\affiliation[label2]{organization={Peoples' Friendship University of Russia, Moscow, Russia},}

\author[label3]{Keivan Ramazani}
\affiliation[label3]{organization={Razi University, Kermanshah, Iran},
}

\begin{abstract}
In this note, we briefly present a generalized tensor CUR (GTCUR) approximation for tensor pairs $(\cX,\cY)$ and tensor triplets $(\cX,\cY,\cZ)$ based on the tubal product (t-product). 
We use the tensor Discrete Empirical Interpolation Method (TDEIM) to do these extensions. We demonstrate how the TDEIM can be applied to extend the traditional tensor CUR (TCUR) approximation, which operates on a single tensor, to simultaneously compute the TCUR approximations for two or three tensors. This method allows for the sampling of relevant  lateral or horizontal slices from one data tensor in relation to one or two other data tensors. In certain special cases, the Generalized TCUR (GTCUR) method simplifies to the classical TCUR approximations for both tensor pairs and tensor triplets, akin to the process shown for matrices.
\end{abstract}
\begin{keyword}
CUR approximation, generalized tensor SVD, tubal product

\MSC 15A69 \sep 46N40 \sep 15A23
\end{keyword}

\end{frontmatter}

\section{Introduction}
The matrix CUR (MCUR) approximation is a well-known technique for fast low-rank approximation of matrices \cite{goreinov1997theory}. It offers an interpretable approximation and a compact data representation as it uses the actual columns/rows of the underlying data matrix to build the factor matrices.
 The generalized MCUR (GMCUR) approximation for matrix pairs and  matrix triplets were presented in \cite{gidisu2022generalized} and \cite{gidisu2022restricted}, respectively. Like the SVD, the MCUR is utilized for a single matrix, whereas the GMCUR addresses a pair or triplet of matrices, offering low-rank approximations for two or three matrices at the same time. The central concept behind these generalizations is to employ the same column or row indices for the two data matrices in their low-rank approximations. To be more precise, let two data matrices $\X\in\mathbb{R}^{I_1\times I_2},\,(I_1\geq I_2),\,\Y\in\mathbb{R}^{I_3\times I_2},\,(I_3\geq I_2)$ that have the same number of columns be given. The generalized SVD (GSVD) \cite{van1976generalizing,paige1981towards}, guarantees the following decompositions
\begin{eqnarray}\label{gs_1}
    \X&=&\U\,{\rm diag}(\alpha_1,\cdots,\alpha_{I_2})\,\Z,\quad \alpha_i\in[0,1]\\\label{gs_2}
    \Y&=&\V\,{\rm diag}(\beta_1,\cdots,\beta_{I_2})\,\Z,\quad\beta_i\in[0,1] 
\end{eqnarray}
where $\alpha_i^2+\beta_i^2=1$ with the ratios $\beta_i/\alpha_i$ of nondecreasing order for $i=1,2,\ldots,I_2$. Here, $\U\in\mathbb{R}^{I_1\times I_1},\,\V\in\mathbb{R}^{I_3\times I_3}$ are orthogonal, while $\Z\in\mathbb{R}^{I_2\times I_2}$ is a nonsingular matrix. As can be seen, the GSVD shares the common factor matrix $\Z$ between the given data matrices $\X$ and $\Y$. On the other hand, from the theory of the MCUR approximation, we know that by applying the Discrete Empirical Interpolation Method (DEIM) \cite{sorensen2016deim} to the basis $\Z$ (to be discussed in Section \ref{Sec:MACA}), we can sample appropriate column indices from the data matrices $\X$ and $\Y$. Since the same basis matrix $\Z$ is used within the DEIM, the same indices will be produced for sampling the columns of the matrix $\X$ and the matrix $\Y$. Besides, the matrices $\U$ and $\V$ can be used to sample row indices of the matrices $\X$ and $\Y$, respectively. This means that the selected row indices for these matrices can be different. As a result, for two matrices $\X$ and $\Y$ with the same number of columns, we can use the identical column indices for their MCUR approximations. In \cite{gidisu2022generalized} , the authors demonstrate that this concept can be effectively applied in scenarios where the goal is to identify the most distinguishing features from one dataset in relation to another. It is also relevant for recovering data matrices affected by colored noise and for subgroup discovery applications. Building on this idea, the authors in \cite{gidisu2022restricted} examine triplet matrices $({\bf X},{\bf Y},{\bf Z})$ and employ restricted SVD \cite{zha1991restricted} to construct their common factor matrices. They then apply the DEIM to these common factor matrices to sample both columns and rows. This method has been shown to effectively extract the most discriminative features from one dataset compared to two others. The GMCUR for matrix triplets is also inspired by canonical correlation analysis and has proven successful in recovering datasets influenced by colored noise; further details can be found in \cite{gidisu2022generalized,gidisu2022restricted}. 

In this note, due to the applications of tensors \cite{elden2019solving,asante2021matrix,asante2023image}, we focus on the tensor SVD (t-SVD) \cite{kilmer2013third} that is defined based on the tubal product (t-product). The t-SVD has similar properties as the classical SVD. In particular, in contrast to the Tucker decomposition \cite{tucker1964extension,tucker1966some} or Canonical Polyadic Decomposition \cite{hitchcock1927expression,hitchcock1928multiple}, its truncation provides the best low tubal rank approximation in the least-squares sense. The GSVD was generalized to tensors based on the t-SVD in \cite{he2021generalized,zhang2021cs}.
Inspired by the works in \cite{gidisu2022generalized,gidisu2022restricted}, we extend the tensor CUR (TCUR) approximation \cite{tarzanagh2018fast} to accommodate pairs and triplets of tensors based on the t-product, which we refer to as the generalized TCUR (GTCUR) approximation. Given that real-world datasets frequently exhibit multidimensional structures, it is beneficial to adapt the concepts introduced in \cite{gidisu2022generalized,gidisu2022hybrid} for tensors, ensuring that the integrity of the data tensor structures is maintained. Our objective is to achieve this goal, and we present several generalizations of GMCUR for tensors. To facilitate this, we utilize the tensor DEIM (TDEIM) index selection algorithm \cite{ahmadi2024robust}, which we recently developed to sample significant lateral and horizontal slices, playing a crucial role in the GTCUR method's development. Like the matrix case, the TDEIM facilitates the computation of identical lateral/horizontal slices for a specified pair or triplet of third-order tensors. It is worth noting that alternative sampling methods, such as top tubal leverage scores \cite{mahoney2011randomized,tarzanagh2018fast} or their sampling variants, could be used instead of the TDEIM. However, since the TDEIM has been demonstrated to provide the optimal sampling method \cite{sorensen2016deim,ahmadi2024robust}, we have chosen to adopt it in our research.

The remainder of this note is organized as follows. We first present some basic tensor concepts in Section \ref{Sec:prelim}. The matrix cross approximation is outlined in Section \ref{Sec:MACA} and the Tensor Discrete Empirical Interpolation Method (TDEIM) as an extension of the DEIM method is presented in Section \ref{deimt}. In Sections \ref{Sec:PTACA} and \ref{Sec:PTACT}, we show how the matrix cross approximation for matrix pairs/triplets can be generalized to tensor pairs and tensor triplets, respectively. The numerical experiments are presented in Section \ref{Sec:num}. The conclusion is given in Section \ref{Sec:Con}.

\section{Preliminaries}\label{Sec:prelim}
Prior to introducing the main materials, we will first outline the fundamental notations and definitions. An underlined bold capital case letter, a bold capital case letter, and a bold lower case letter are used to represent a tensor, a matrix, and a vector, respectively.  For a third-order tensor $\underline{\X},$ we call the slices $\underline{\X}(:,:,k),\,\underline{\X}(:,j,:),\,\underline{\X}(i,:,:)$ frontal, lateral, and horizontal slices. For simplicity, sometimes for a frontal slice $\cX(:,:,i)$, we use the notation $\cX_i$. 
For a third-order tensor $\cX,$ 
the fiber $\underline{\X}(i,j,:)$ is called a tube. {The matrix's component-wise complex conjugate is denoted by the symbol ``conj''.} To indicate a subset of a matrix or tensor, we use the MATLAB notations. For instance, for a given data matrix $\X$, by $\X(:,\mathcal{J})$ and $\X(\mathcal{I},:)$ we mean two matrices sampling a part of the rows and columns of the matrix $\X$, respectively, where $\mathcal{I}\subset\{1,2,\ldots,I_2\}$ and $\mathcal{J}\subset\{1,2,\ldots,I_1\}$. {For example, let
\[A=\begin{bmatrix}
 1 & 2 & 3 & 4\\
 5 & 6 & 7 & 8\\
 9 & 10 &11 & 12\\
 13 & 14 & 15 & 16\\
 17&18&19&20
\end{bmatrix}\in\mathbb{R}^{5\times 4}
\]
and $\mathcal{I}=\{1,2\}\subset\{1,2,3,4,5\},\, \mathcal{J}=\{2,4\}\subset\{1,2,3,4\}$, then 
\[
A(\mathcal{I},\mathcal{J})=\begin{bmatrix}
 2 & 4\\
 6 & 8
\end{bmatrix}.
\]
We will employ the same notation when sampling indices of tensor modes. }

To introduce the tensor SVD (t-SVD) model, we must first present Definitions 1-5.

\begin{defn} ({t-product})
Let $\underline{\mathbf X}\in\mathbb{R}^{I_1\times I_2\times I_3}$ and $\underline{\mathbf Y}\in\mathbb{R}^{I_2\times I_4\times I_3}$, the t-product $\underline{\mathbf X}*\underline{\mathbf Y}\in\mathbb{R}^{I_1\times I_4\times I_3}$ is defined as follows
\begin{equation}\label{TPROD}
\underline{\mathbf C} = \underline{\mathbf X} * \underline{\mathbf Y} = {\rm fold}\left( {{\rm circ}\left( \underline{\mathbf X} \right){\rm unfold}\left( \underline{\mathbf Y} \right)} \right),
\end{equation}
where 
\[
{\rm circ} \left(\underline{\mathbf X}\right)
=
\begin{bmatrix}
\underline{\mathbf X}(:,:,1) & \underline{\mathbf X}(:,:,I_3) & \cdots & \underline{\mathbf X}(:,:,2)\\
\underline{\mathbf X}(:,:,2) & \underline{\mathbf X}(:,:,1) & \cdots & \underline{\mathbf X}(:,:,3)\\
 \vdots & \vdots & \ddots &  \vdots \\
 \underline{\mathbf X}(:,:,I_3) & \underline{\mathbf X}(:,:,I_3-1) & \cdots & \underline{\mathbf X}(:,:,1)
\end{bmatrix},
\]
and 
\[
{\rm unfold}(\underline{\mathbf Y})=
\begin{bmatrix}
\underline{\mathbf Y}(:,:,1)\\
\underline{\mathbf Y}(:,:,2)\\
\vdots\\
\underline{\mathbf Y}(:,:,I_3)
\end{bmatrix},\hspace*{.5cm}
\underline{\mathbf Y}={\rm fold} \left({\rm unfold}\left(\underline{\mathbf Y}\right)\right).
\]
The Discreet Fourier Transform (DFT) is used to execute the t-product as described in \cite{kilmer2011factorization,kilmer2013third}, and it was recommended in \cite{kernfeld2015tensor} to use any invertible transformation instead of the DFT. Later, in \cite{jiang2020framelet} and \cite{li2022nonlinear}, nonivertible and even nonlinear mapping were employed. The ability to compute the t-SVD of a data tensor with a lower tubal rank is a benefit of using such unitary transformations \cite{song2020robust,jiang2020framelet}.
{The MATLAB command ${\rm fft}(\cX,[],3)$, computes the DFT of every tube of the data tensor $\cX$.} The fast version of the t-product in which the DFT of only the first $\lceil \frac{I_3+1}{2}\rceil$ frontal slices is needed is summarized in Algorithm \ref{ALG:TSVDP}, see \cite{hao2013facial,lu2019tensor} for details.
\end{defn}

\begin{defn} {\rm ({Transpose})
Let $\underline{\mathbf X}\in\mathbb{R}^{I_1\times I_2\times I_3}$ be a given tensor. Then the transpose of the tensor $\underline{\mathbf X}$ is denoted by $\underline{\mathbf X}^{T}\in\mathbb{R}^{I_2\times I_1\times I_3}$, which is constructed by transposing all its frontal slices and then reversing the order of transposed frontal
slices $2$ through $I_3$.}
\end{defn}

\begin{defn} ({Identity tensor})
Identity tensor $\underline{\mathbf I}\in\mathbb{R}^{I_1\times I_1\times I_3}$ is a tensor whose first frontal slice is an identity matrix of size $I_1\times I_1$ and all other frontal slices are zero. It is easy to show $\underline{\I}*\underline{\X}=\underline{\X}$ and $\underline{\X}*\underline{\I} =\underline{\X}$ for all tensors of conforming sizes.
\end{defn}
\begin{defn} ({Orthogonal tensor})
A tensor $\underline{\mathbf X}\in\mathbb{R}^{I_1\times I_1\times I_3}$ is orthogonal if ${\underline{\mathbf X}^T} * \underline{\mathbf X} = \underline{\mathbf X} * {\underline{\mathbf X}^ T} = \underline{\mathbf I}$.
\end{defn}

\begin{defn} ({f-diagonal tensor})
If all frontal slices of a tensor are diagonal, then the tensor is called f-diagonal.
\end{defn}

\begin{defn}
(Moore–Penrose pseudoinverse of a tensor) Let $\cX\in\mathbb{R}^{I_1\times I_2\times I_3}$ be given. The Moore-Penrose (MP) pseudoinverse of the tensor $\cX$ is denoted by $\cX^{\dag}\in\mathbb{R}^{I_2\times I_1\times I_3}$ is a unique tensor satisfying the following four equations:
\begin{eqnarray*}
\cX^{\dag}*\cX*\cX^{\dag}=\cX^{\dag},\quad \cX*\cX^{\dag}*\cX=\cX,\\
(\cX*\cX^{\dag})^T=\cX*\cX^{\dag},\quad (\cX^{\dag}*\cX)^T=\cX^{\dag}*\cX.
\end{eqnarray*}
The MP pseudoinverse of a tensor can also be computed in the Fourier domain and this is shown in Algorithm \ref{ALG:TSVDP}. \textcolor{blue}{The inverse of a tensor $\cX$, denoted by $\cX^{-1}$ is a special case of MP for which we have $\cX^{-1}*\cX=\cX*\cX^{-1}=\underline{\bf I}$.}
\end{defn}

\RestyleAlgo{ruled}
\LinesNumbered
\begin{algorithm}
\SetKwInOut{Input}{Input}
\SetKwInOut{Output}{Output}\Input{Two data tensors $\underline{\mathbf X} \in {\mathbb{R}^{{I_1} \times {I_2} \times {I_3}}},\,\,\underline{\mathbf Y} \in {\mathbb{R}^{{I_2} \times {I_4} \times {I_3}}}$} 
\Output{t-product $\underline{\mathbf C} = \underline{\mathbf X} * \underline{\mathbf Y}\in\mathbb{R}^{I_1\times I_4\times I_3}$}
\caption{Fast t-product of two tensors \cite{kilmer2011factorization,lu2019tensor}}\label{ALG:TSVDP}
      {
      $\widehat{\underline{\mathbf X}} = {\rm fft}\left( {\underline{\mathbf X},[],3} \right)$;\\
      $\widehat{\underline{\mathbf Y}} = {\rm fft}\left( {\underline{\mathbf Y},[],3} \right)$;\\
\For{$i=1,2,\ldots,\lceil \frac{I_3+1}{2}\rceil$}
{                        
$\widehat{\underline{\mathbf C}}\left( {:,:,i} \right) = \widehat{\underline{\mathbf X}}\left( {:,:,i} \right)\,\widehat{\underline{\mathbf Y}}\left( {:,:,i} \right)$;\\
}
\For{$i=\lceil\frac{I_3+1}{2}\rceil+1\ldots,I_3$}{
$\widehat{\underline{\mathbf C}}\left( {:,:,i} \right)={\rm conj}(\widehat{\underline{\mathbf C}}\left( {:,:,I_3-i+2} \right))$;
}
$\underline{\mathbf C} = {\rm ifft}\left( {\widehat{\underline{\mathbf C}},[],3} \right)$;   
       	}       	
\end{algorithm}

\RestyleAlgo{ruled}
\LinesNumbered
\begin{algorithm}
\SetKwInOut{Input}{Input}
\SetKwInOut{Output}{Output}\Input{The data tensor $\underline{\mathbf X} \in {\mathbb{R}^{{I_1} \times {I_2} \times {I_3}}}$} 
\Output{Moore-Penrose pseudoinverse $\underline{\mathbf X}^{\dag}\in\mathbb{R}^{I_2\times I_1\times I_3}$}
\caption{Fast Moore-Penrose pseudoinverse computation of the tensor $\underline{\bf X}$}\label{ALG:TSVDP}
      {
      $\widehat{\underline{\mathbf X}} = {\rm fft}\left( {\underline{\mathbf X},[],3} \right)$;\\
\For{$i=1,2,\ldots,\lceil \frac{I_3+1}{2}\rceil$}
{                        
$\widehat{\underline{\mathbf C}}\left( {:,:,i} \right) = {\rm pinv}\,(\widehat{\X}(:,:,i))$;\\
}
\For{$i=\lceil\frac{I_3+1}{2}\rceil+1,\ldots,I_3$}{
$\widehat{\underline{\mathbf C}}\left( {:,:,i} \right)={\rm conj}(\widehat{\underline{\mathbf C}}\left( {:,:,I_3-i+2} \right))$;
}
$\underline{\mathbf X}^{\dag} = {\rm ifft}\left( {\widehat{\underline{\mathbf C}},[],3} \right)$;   
       	}       	
\end{algorithm}

\textcolor{blue}{The tensor SVD (t-SVD) is a viable tensor decomposition that represents a tensor as the t-product of three tensors. The first and last tensors are orthogonal, while the middle tensor is an f-diagonal tensor. Let $\underline{\X}\in\mathbb{R}^{I_1\times I_2\times I_3}$, then the t-SVD gives the following model:
\[
\underline{\X}= \cU*\underline{\bf S}*\cV^T,
\]
where $\cU\in\mathbb{R}^{I_1\times I_1\times I_3 },\,\underline{\bf S}\in\mathbb{R}^{I_1\times I_2\times I_3},$ and $\cV\in\mathbb{R}^{I_2\times I_2 \times I_3}$. The tensors $\cU$ and $\cV$ are orthogonal, while the tensor $\underline{\bf S}$ is f-diagonal. The procedure of the computation of the t-SVD is presented in Algorithm \ref{ALG:Tsvd}.  A truncated T-SVD of tubal rank $R$ is obtained as follows:
\[
\underline{\X}\approx \cU_R*\underline{\bf S}_R*\cV_R^T,
\]
where $\cU_R=\cU(:,1:R,:)\in\mathbb{R}^{I_1\times R\times I_3},\,\cV_R=\cV(:,1:R,:)\in\mathbb{R}^{I_2\times R\times I_3}$ and $\underline{\bf S}_R=\underline{\bf S}(1:R,1:R,:)\in\mathbb{R}^{R\times R\times I_3}$.}

\RestyleAlgo{ruled}
\LinesNumbered
\begin{algorithm}
\SetKwInOut{Input}{Input}
\SetKwInOut{Output}{Output}\Input{The data tensor $\underline{\mathbf X} \in {\mathbb{R}^{{I_1} \times {I_2} \times {I_3}}}$ and a target tubal-rank $R$} 
\Output{The truncated t-SVD of the tensor $\cX$ as $\underline{\bf X}\approx\cU*\underline{\bf S}*\cV^T$}
\caption{The truncated t-SVD decomposition of the tensor}\label{ALG:Tsvd}
      {
      $\widehat{\underline{\mathbf X}} = {\rm fft}\left( {\underline{\mathbf X},[],3} \right)$;\\
\For{$i=1,2,\ldots,\lceil \frac{I_3+1}{2}\rceil$}
{                        
$[\widehat{\underline{\mathbf U}}\left( {:,:,i} \right),\widehat{\underline{\bf  S}}(:,:,i),\widehat{\V}(:,:,i)] = {\rm Truncated\operatorname{-} svd}\,(\widehat{\X}(:,:,i),R)$;\\
}
\For{$i=\lceil\frac{I_3+1}{2}\rceil+1,\ldots,I_3$}{
$\widehat{\underline{\mathbf U}}\left( {:,:,i} \right)={\rm conj}(\widehat{\underline{\mathbf U}}\left( {:,:,I_3-i+2} \right))$;\\
$\widehat{\underline{\mathbf S}}\left( {:,:,i} \right)=\widehat{\underline{\mathbf S}}\left( {:,:,I_3-i+2} \right)$;\\
$\widehat{\underline{\mathbf V}}\left( {:,:,i} \right)={\rm conj}(\widehat{\underline{\mathbf V}}\left( {:,:,I_3-i+2} \right))$;
}
$\underline{\mathbf U}_R= {\rm ifft}\left( {\widehat{\underline{\mathbf U}},[],3} \right)$;
$\underline{\mathbf S}_R= {\rm ifft}\left( {\widehat{\underline{\mathbf S}},[],3} \right)$; 
$\underline{\mathbf V}_R= {\rm ifft}\left( {\widehat{\underline{\mathbf V}},[],3} \right)$;
       	}       	
\end{algorithm}

\section{Matrix CUR (MCUR) approximation of a single matrix, matrix pairs and matrix triples}\label{Sec:MACA}
A well-known technique for fast low-rank approximation is the Matrix CUR (MCUR) method, or cross approximation \cite{goreinov1997theory}. Its strength lies in selecting a small subset of the original matrix's columns and rows to form an interpretable factorization. This approach maintains the matrix's salient features, e.g. like sparsity and non-negativity,  at linear computational cost.  Let us describe it formally. For a given data matrix $\X\in\mathbb{R}^{I_1\times I_2}$ , the MCUR method seeks the approximation of the form $\X\approx\C\U\R$ where $\C=\X(:,\mathcal{J}),\,\R=\X(\mathcal{I},:)$ and $\mathcal{I}\subset \{1,2,\ldots,I_1\}$ and $\mathcal{J}\subset\{1,2,\ldots,I_2\}$. It is obvious that the best middle matrix is 
\begin{eqnarray}\label{midmatrix}
\U=\C^{\dag}\X{\R}^{\dag},
\end{eqnarray}
as it minimizes the error term or
\begin{eqnarray}
 \min_{\U} \|\X-\C\U\R\|^2_F. 
\end{eqnarray}
It is possible to select columns and rows either deterministically or randomly, allowing for the option to achieve either additive or relative approximation errors. It is widely recognized that in a deterministic context, selecting columns or rows with the highest volume can lead to nearly optimal solutions \cite{goreinov2010find}. The DEIM is another deterministic approach for selecting a matrix's columns or rows, which is based on the leading singular vectors \cite{sorensen2016deim}.. The three most commonly utilized probability distributions for column and row selection are uniform, length-squared, and leverage-score distributions. For further information on these sampling methods, refer to \cite{mahoney2011randomized}. Research has demonstrated that using the leverage-score probability distribution for sampling columns can yield more practical approximations with relative error accuracy \cite{drineas2008relative}.

The MCUR approximation for matrix pairs (matrices that have the same number of columns) was introduced as an extension of classical MCUR approximations in \cite{gidisu2022generalized}. The key idea of this generalization involves selecting columns from two matrices that share identical column indices to identify the features of one dataset that are most relevant to the other. In this approach, the common factor matrix obtained through the Generalized Singular Value Decomposition (GSVD) of the two matrices is utilized to sample the columns using the DEIM technique. Additionally, the MCUR approximation for triplet matrices was discussed in \cite{gidisu2022restricted}, aiming to uncover relevant characteristics of one data matrix in relation to two others. In this case, the authors employed the Randomized Singular Value Decomposition (RSVD) to generate two common factor matrices, after which DEIM is applied to sample both column and row indices. We will first explore the extension of DEIM to tensors based on the t-product in the next section, then we use it to extend the TCUR approximation to tensor pairs and tensor triplets in Sections \ref{Sec:PTACA} and \ref{Sec:PTACT}, respectively.

\section{Tensor Discrete Empirical Interpolation Method (TDEIM) for lateral/horizontal slice sampling}\label{deimt}
TCUR approximates a tensor $\cX\in\mathbb{R}^{I_1\times I_2\times I_3}$ as $\cX\approx\cX(:,\mathcal{I},:)*\cU*\cX(\mathcal{J},:,:)$, where $\cX(:,\mathcal{I},:)$ and $\cX(\mathcal{J},:,:)$ are actual lateral and horizontal slices of the original tensor and $\cU=\cX(:,\mathcal{I},:)^{\dagger}*\cX*\cX(\mathcal{J},:,:)^{\dagger}$, see Figure \ref{tcur_fig} for visual illustration and we refer to the review paper \cite{ahmadi2021cross} for more details on different types of TCUR methods. Determining a suitable selection of lateral and horizontal slices is a complex task. Recently, we extended the DEIM method to tensors using the t-product \cite{ahmadi2024robust}, which we refer to as tensor DEIM (TDEIM). Experimental results demonstrate that TDEIM provides superior sampling accuracy compared to existing methods, including top tubal leverage score sampling and uniform sampling. The process for using TDEIM to select horizontal slices is outlined in Algorithm 4.

Appropriate section of lateral and horizontal slices is a challenging problem. We have recently generalized the DEIM method to the tensor case based on the t-product \cite{ahmadi2024robust} that we call tensor DEIM (TDEIM). It has been experimentally shown that the TDEIM achieved the best sampling accuracy compared to the existing sampling techniques, such as top tubal leverage score sampling and uniform sampling.  The process for using TDEIM to select horizontal slices is outlined in Algorithm \ref{ALG:DEIM_ten}.
\begin{figure}
\begin{center}
\includegraphics[width=0.7\linewidth]{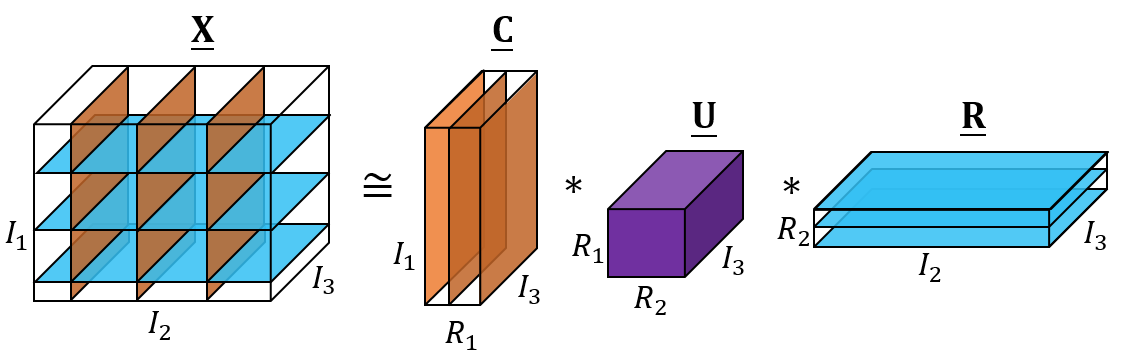}\\
\caption{\small{Tensor CUR approximation based on slice sampling \cite{tarzanagh2018fast,ahmadi2021cross}.}}\label{tcur_fig}
\end{center}
\end{figure}
A fundamental concept of the TDEIM is the {\it interpolatory projector}, which we will define now. For a given set of indices ${\bf s}\in\mathbb{R}^R$, the full tubal-rank tensor\footnote{A tensor with linear independent lateral slices, for example, the tensor $\cU$ obtained from the t-SVD can be used.} $\cU\in\mathbb{R}^{I_1\times R\times I_3}$ and considering $\underline{\bf S}=\underline{\bf I}(:,{\bf s},:)\in\mathbb{R}^{I_1\times R\times I_3}$, as the index selection tensor\footnote{It is called index selection tensor because $\underline{\bf X}({\bf s},:,:)=\underline{\bf S}^T*\underline{\bf X}.$}. where $\underline{\bf I}\in\mathbb{R}^{I_1\times I_1\times I_3}$ is an identity tensor, we build the tensorial oblique projection operator defined as follows
\begin{eqnarray}\label{proj}
\underline{\mathcal{P}}=\underline{\bf U}*(\underline{\bf{S}}^T*\underline{\bf{U}})^{-1}*\underline{\bf{S}}^T.
\end{eqnarray}
Given an arbitrary tensor $\underline{\bf G}\in\mathbb{R}^{I_1\times I_2\times I_3}$ and $\cX=\underline{\mathcal{P}}*\underline{\bf G}$, we have  
\begin{eqnarray}
\nonumber
\cX({\bf s},:,:)&=&\underline{\bf S}^T*\cX=
\underline{\bf S}^T*\underline{\bf U}*(\underline{\bf{S}}^T*\underline{\bf{U}})^{-1}*\underline{\bf{S}}^T*\underline{\bf G}\\
&=&
\underline{\bf{S}}^T*\underline{\bf G}=\underline{\bf G}({\bf s},:,:).
\end{eqnarray}
This means that the projection operator $\underline{\mathcal{P}}$ preserves the horizontal slices of $\underline{\bf G}$ specified by the index set ${\bf s}$. This justifies the name of interpolation. The TDEIM starts with selecting an index with the maximum Euclidean norm of the tubes in the first lateral slice of the basis, i.e. $\cU(:,1,:),$ and assign it as the index of the first sampled horizontal slice. The subsequent indices are selected according to the indices with the maximum Euclidean norm of the tubes of the residual lateral slice that is computed by removing the direction of the tensorial interpolatory projection in the previous basis vectors from the subsequent one. To be more precise, let the indices ${\bf s}_{j-1}=\{s_1,s_2,\ldots,s_{j-1}\}$ have been already selected, and we want to select $s_j$. To do so, we compute the residual slice
\[
\underline{\bf R}(:,j,:)=\underline{\bf U}(:,j,:)-\underline{\mathcal{P}}_{j-1}*\underline{\bf U}(:,j,:),
\]
where $\underline{\mathcal{P}}_{j-1}=\underline{\bf U}^{j-1}*({\underline{\bf S}^{j-1}}^T*\underline{{\bf U}}^{j-1})^{-1}*{\underline{\bf S}^{j-1}}^T,\,\underline{\bf S}^{j-1}=\underline{\bf I}(:,{\bf s}_{j-1},:),$ and $\underline{\bf U}^{j-1}=\underline{\bf U}(:,{\bf s}_{j-1},:)$. Then, a new index with the maximum Euclidean norm of tubes or
\[
s_j=\arg\max_{1\leq i \leq I_1}\|\underline{\bf R}(i,j,:)\|,
\]
is set as our new index $s_j$. This will be the new sampled horizontal slice's index. The same procedure can be used to select lateral slices where the tensor $\cV$ obtained from the t-SVD of the tensor $\cX$ should be used. 

Similar to the matrix case, let us introduce two quantities as follows
\begin{eqnarray}\label{t_const}
 \tilde{\eta}_p=\frac{1}{I_3}\max_{i}\,\left({\|(\widehat{\cS}^T_i\widehat{\cU}_i)^{-1}\|_2^2}\right),\quad\tilde{\eta}_q=\frac{1}{I_3}\max_{i}\,\left({\|(\widehat{\cV}^T_i\widehat{\cQ}_i)^{-1}\|_2^2}\right), 
\end{eqnarray}
 that will be used in the next theorem. Here, $\cV\in\mathbb{R}^{I_2\times R\times I_3}$ is a basis tensor for the subspace of horizontal slices of the original data tensor, $\underline{\bf Q}=\underline{\bf I}(:,{\bf s}_{j-1},:)$ is a tensor of some sampled lateral slices of the identity tensor specified by the index set ${\bf s}_{j-1}=\{s_1,s_2,\ldots,s_{j-1}\},\,\widehat{\cQ}={\rm fft}(\cQ,[],3),\,\widehat{\cV}={\rm fft}(\cV,[],3),\,\widehat{\cQ}_i=\widehat{\cQ}(:,:,i),$ and $\widehat{\cV}_i=\widehat{\cV}(:,:,i)$.   
 
The next theorem provides the upper error bound of the TCUR approximation obtained by the TDEIM sampling method.

\begin{thm}\label{thm_tub} \cite{ahmadi2024robust}
Suppose $\cX\in\mathbb{R}^{I_1\times I_2\times I_3}$ and $1\leq R<\min(I_1,I_2)$. Assume that  horizontal slice and lateral slice indices ${\bf p}$ and ${\bf q}$ give full tubal rank tensors $\cC=\cX(:,{\bf q},:)=\cX*\underline{\mathcal Q}$
and $\cR=\cX({\bf p},:,:)={\underline{\mathcal P}}*\cX$, where $\underline{\mathcal{P}}$ and $\underline{\mathcal{Q}}$ are tensorial interpolation projectors for horizontal and lateral slice sampling\footnote{The  tensorial
interpolation projector for the lateral slice sampling is defined as $\underline{\mathcal{Q}}=\underline{\V}*(\underline{\Q}^T*\underline{\V})^{-1}*\underline{\Q}^T$, where $\underline{\V}$ is a basis tensor and $\underline{\Q}$ is the index selection tensor. The same can be defined for horizontal slice selection where a basis tensor $\underline{\bf U}$ is used.}, respectively with corresponding finite error constants $\tilde{\eta}_p,\,\tilde{\eta}_q$ defined in \eqref{t_const} and set $\cU=\underline{\C}^{\dag}*\cX*\cR^{\dag}$. Then
\begin{eqnarray}
    \|\cX-\cC*\cU*\cR\|^2_F\leq (\tilde{\eta}_p+\tilde{\eta}_q)\sum_{i=1}^{I_3}\sum_{t>R}(\sigma^i_{t})^2,
\end{eqnarray} 
where $\sigma^i_{t}$ is the $t$-th largest singular values of the frontal slice $\widehat{\cX}_i=\widehat{\cX}(:,:,i)$.
\end{thm}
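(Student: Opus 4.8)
The plan is to pass to the Fourier domain along the third mode and reduce the tensor estimate to $I_3$ independent matrix CUR estimates, one per frontal slice, which are then recombined through Parseval's identity. Writing $\widehat{\,\cdot\,}={\rm fft}(\cdot,[],3)$, the t-product becomes a frontal-slicewise matrix product, so that $\widehat{(\cC*\cU*\cR)}_i=\widehat{\cC}_i\,\widehat{\cU}_i\,\widehat{\cR}_i$, and the slicewise Moore--Penrose computation gives $\widehat{\cU}_i=\widehat{\cC}_i^{\dag}\widehat{\cX}_i\widehat{\cR}_i^{\dag}$ together with $\widehat{\cC}_i=\widehat{\cX}_i(:,{\bf q})$ and $\widehat{\cR}_i=\widehat{\cX}_i({\bf p},:)$. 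Because the selection tensors are lateral slices of the identity tensor, whose only nonzero frontal slice is the first, their DFT is constant across the third mode, i.e. $\widehat{\cS}_i=S$ and $\widehat{\cQ}_i=S_q$ are fixed selection matrices for every $i$, while $\widehat{\cU}_i,\widehat{\cV}_i$ are exactly the truncated singular-vector matrices of $\widehat{\cX}_i$ produced by the t-SVD. Using $\|\cX\|_F^2=\frac{1}{I_3}\sum_{i=1}^{I_3}\|\widehat{\cX}_i\|_F^2$, it suffices to bound each $\|\widehat{\cX}_i-\widehat{\cC}_i\widehat{\cU}_i\widehat{\cR}_i\|_F^2$ by the matrix DEIM-CUR inequality and then sum.

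For a fixed slice I would prove the matrix bound as follows. Since $\widehat{\cU}_i$ is the optimal middle factor, $\widehat{\cC}_i\widehat{\cU}_i\widehat{\cR}_i=\mathbb{P}_C\widehat{\cX}_i\mathbb{P}_R$ with the two Hermitian (orthogonal) projectors $\mathbb{P}_C=\widehat{\cC}_i\widehat{\cC}_i^{\dag}$ and $\mathbb{P}_R=\widehat{\cR}_i^{\dag}\widehat{\cR}_i$. Splitting $\widehat{\cX}_i-\mathbb{P}_C\widehat{\cX}_i\mathbb{P}_R=(I-\mathbb{P}_C)\widehat{\cX}_i+\mathbb{P}_C\widehat{\cX}_i(I-\mathbb{P}_R)$ and using the self-adjointness and idempotency of $\mathbb{P}_C$ to annihilate the cross term, Pythagoras gives $\|\widehat{\cX}_i-\mathbb{P}_C\widehat{\cX}_i\mathbb{P}_R\|_F^2\le\|(I-\mathbb{P}_C)\widehat{\cX}_i\|_F^2+\|\widehat{\cX}_i(I-\mathbb{P}_R)\|_F^2$. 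Each orthogonal-projection error is then compared with an oblique interpolatory-projection error: since $\widehat{\cX}_i\mathbb{P}_R$ is the best approximation of $\widehat{\cX}_i$ whose row space lies in that of $\widehat{\cR}_i$, and the row interpolatory projector $\mathcal{P}_i=\widehat{\cU}_i(S^{T}\widehat{\cU}_i)^{-1}S^{T}$ reproduces the leading rank-$R$ part $(\widehat{\cX}_i)_R$, one obtains $\|\widehat{\cX}_i(I-\mathbb{P}_R)\|_F\le\|\widehat{\cX}_i-\mathcal{P}_i\widehat{\cX}_i\|_F=\|(I-\mathcal{P}_i)(\widehat{\cX}_i-(\widehat{\cX}_i)_R)\|_F$, and the oblique-projector identity $\|I-\mathcal{P}_i\|_2=\|\mathcal{P}_i\|_2=\|(S^{T}\widehat{\cU}_i)^{-1}\|_2$ turns this into $\|(S^{T}\widehat{\cU}_i)^{-1}\|_2\,\big(\sum_{t>R}(\sigma_t^i)^2\big)^{1/2}$. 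The analogous argument applied to $\widehat{\cX}_i^{*}$ (columns ${\bf q}$ via $\widehat{\cV}_i$) controls $\|(I-\mathbb{P}_C)\widehat{\cX}_i\|_F$ by $\|(S_q^{T}\widehat{\cV}_i)^{-1}\|_2\,\big(\sum_{t>R}(\sigma_t^i)^2\big)^{1/2}$, where $\|(S_q^{T}\widehat{\cV}_i)^{-1}\|_2=\|(\widehat{\cV}_i^{T}\widehat{\cQ}_i)^{-1}\|_2$.

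Adding the two per-slice contributions yields $\|\widehat{\cX}_i-\widehat{\cC}_i\widehat{\cU}_i\widehat{\cR}_i\|_F^2\le\big(\|(S^{T}\widehat{\cU}_i)^{-1}\|_2^2+\|(S_q^{T}\widehat{\cV}_i)^{-1}\|_2^2\big)\sum_{t>R}(\sigma_t^i)^2$. Replacing each slice norm by its maximum over $i$ and recalling $\tilde{\eta}_p=\frac{1}{I_3}\max_i\|(\widehat{\cS}_i^{T}\widehat{\cU}_i)^{-1}\|_2^2$ and $\tilde{\eta}_q=\frac{1}{I_3}\max_i\|(\widehat{\cV}_i^{T}\widehat{\cQ}_i)^{-1}\|_2^2$, the Parseval sum gives $\frac{1}{I_3}\sum_i(I_3\tilde{\eta}_p+I_3\tilde{\eta}_q)\sum_{t>R}(\sigma_t^i)^2=(\tilde{\eta}_p+\tilde{\eta}_q)\sum_{i=1}^{I_3}\sum_{t>R}(\sigma_t^i)^2$, which is the claim. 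The step I expect to be the main obstacle is the per-slice matrix estimate, namely justifying the orthogonal-to-oblique projection comparison together with the identity $\|I-\mathcal{P}_i\|_2=\|\mathcal{P}_i\|_2$, and verifying that the \emph{single} TDEIM index set ${\bf p},{\bf q}$ keeps every $S^{T}\widehat{\cU}_i$ and $S_q^{T}\widehat{\cV}_i$ invertible across all frontal slices; this is exactly where the full-tubal-rank hypothesis on $\cC$ and $\cR$ enters, and it is what legitimizes bounding the slicewise constants by the maxima defining $\tilde{\eta}_p$ and $\tilde{\eta}_q$.
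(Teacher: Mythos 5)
The paper itself contains no proof of Theorem \ref{thm_tub}: the result is imported verbatim from the cited reference, so there is no in-paper argument to compare against. Your proposal is correct and follows the natural (and, in the cited work, the actual) route: pass to the Fourier domain, where the t-product and the Moore--Penrose pseudoinverses act slicewise and the selection tensors become frequency-independent selection matrices; apply the Sorensen--Embree matrix DEIM--CUR estimate to each frontal slice $\widehat{\cX}_i$ via the orthogonal split, the comparison of $\widehat{\cC}_i\widehat{\cC}_i^{\dag}$ and $\widehat{\cR}_i^{\dag}\widehat{\cR}_i$ with the oblique interpolatory projectors, and the identity $\|I-\mathcal{P}_i\|_2=\|\mathcal{P}_i\|_2$; then recombine through $\|\cdot\|_F^2=\tfrac{1}{I_3}\sum_i\|\widehat{(\cdot)}_i\|_F^2$, the factors $1/I_3$ cancelling against those built into the definitions in \eqref{t_const} exactly as you compute. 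The one hypothesis worth stating explicitly rather than leaving implicit is that the bases generating the interpolatory projectors must be the truncated t-SVD factors of $\cX$ itself, since the key cancellation $(I-\mathcal{P}_i)\bigl(\widehat{\cX}_i\bigr)_R=0$ (and its column-side analogue) fails for an arbitrary full-tubal-rank basis; with that made precise, your argument is complete.
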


Theorem \ref{thm_tub} shows that the TCUR approximation with the middle tensor defined above can provide approximation within a factor $\tilde{\eta}_p+\tilde{\eta}_q$ of the best tubal rank $k$ approximation. It also indicates that the conditioning of the problem depends on these two quantities and the lateral/horizontal slices should be selected in such a way that these quantities should be controlled. In \cite{ahmadi2024robust}, it was empirically shown that these quantities are sufficiently small for many datasets. 

\begin{rem}
The basis tensors $\cU$ and $\cV$ required in Algorithm \ref{ALG:DEIM_ten} can be computed very fast through the randomized truncated t-SVD \cite{zhang2018randomized,ahmadi2022efficient,ahmadi2024randomized}. This version can be regarded as a randomized version of the TDEIM algorithm.
\end{rem}

\RestyleAlgo{ruled}
\LinesNumbered
\begin{algorithm}
\SetKwInOut{Input}{Input}
\SetKwInOut{Output}{Output}\Input{ $\cU\in\mathbb{R}^{I_1\times R\times I_3}$ with $R\leq I_1$ (linearly independent lateral slices)} 
\Output{Indices ${\bf s}\in\mathbb{N}^R$ with distinct entries in $\{1,2,\ldots,I_1\}$}
\caption{Tubal DEIM (TDEIM) index selection approach for horizontal slice selection}\label{ALG:DEIM_ten}
      {
$s_1=\arg\max_{1\leq i\leq I_1}\|\cU(i,1,:)\|_2$\\
\For{$j=2,3,\ldots,R$}{
$\underline{\C}=\cU({\bf s},1:j-1,:)^{-1}*\cU({\bf s},j,:)$;\\
$\underline\R=\cU(:,j,:)-\cU(:,1:j-1,:)*\underline{\C}$;\\
$s_j=\arg\max_{1\leq i\leq I_1}\|\underline{\R}(i,j,:)\|_2$
} 
       	}       	
\end{algorithm}

In the next sections, we extend the TCUR to tensor pairs and tensor triples. 

\section{Tensor CUR (TCUR) approximation for tensor pairs based on the t-product}\label{Sec:PTACA}
Inspired by the success of the MCUR approximation, several generalizations of these approaches to tensors have been proposed. For example, see \cite{oseledets2008tucker} for the Tucker model, see \cite{caiafa2010generalizing,oseledets2010tt} for the TT decomposition, see \cite{tarzanagh2018fast} for the t-SVD, and see \cite{espig2012note} for the TC/TR decomposition. In this note, we focus on the t-SVD case and extend it to a pair of tensors. The intuition behind this generalization is similar to the matrix case. More precisely, the indices of selected lateral slices are the same for two data tensors (see Figure \ref{fig:CP} for a graphical illustration). The formal definition of the GTCUR approximation for tensor pairs is presented below.
\begin{figure}
\centering    \includegraphics[width=.9\linewidth]{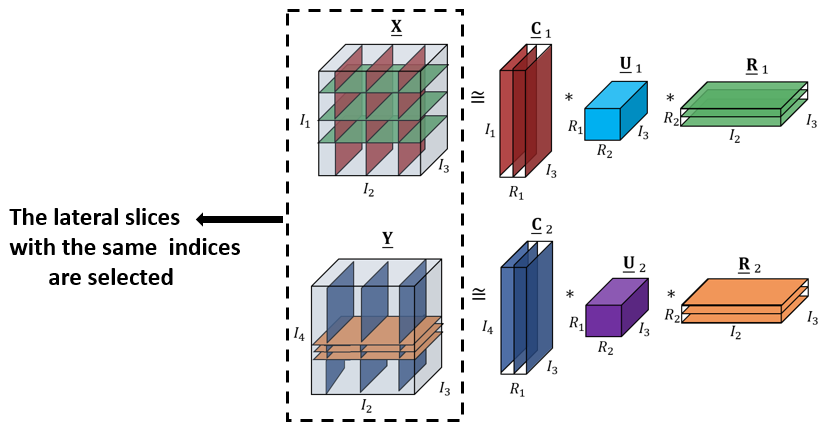}
    \caption{Visualization of the generalized tensor CUR (GTCUR) approximation for tensor pairs $(\cX,\cY)$. The indices of the selected lateral slices of the tensors $\cX$ and $\cY$ are identical while this is not necessarily true for the horizontal slices.}\label{fig:CP}
\end{figure}
\begin{defn}
Let $\cX\in\mathbb{R}^{I_1\times I_2 \times I_3}$ and $\cY\in\mathbb{R}^{I_4\times I_2\times I_3}$ be of full tubal rank, with $I_1 \geq I_2$ and $I_4 \geq I_2$.
A GTCUR decomposition of the tensor pairs $(\cX,\cY)$ with the tubal rank $R,$ is defined as follows
\begin{eqnarray}\label{pgtcur}
\cX\approx {\underline{\bf C}}_1*\cU_1*\cR_1=(\cX*\cP)*\cU_1*(\cS_1^T*\cX),\\\label{pgtcur_2}
\cY\approx {\underline{\bf C}}_2*\cU_2*\cR_2=(\cY*\cP)*\cU_2*(\cS_2^T*\cY),
\end{eqnarray}
where $\cS_1\in\mathbb{R}^{I_1\times R \times I_3},\,\cS_2\in\mathbb{R}^{I_4\times R \times I_3}$ and $\cP\in\mathbb{R}^{I_2\times R \times I_3}$ are the index selection tensors ($R<I_2$).
\end{defn}
Assume that ${\bf s}_1,$ and ${\bf s}_2$ are the horizontal slice indices sampled from the tensors $\cX$ and $\cY$, respectively. Also, let ${\bf p}$ be the common lateral slice indices for both tensors $\cX$ and $\cY$. Then, formulations \eqref{pgtcur}-\eqref{pgtcur_2} can be equivalently written as
\begin{eqnarray}\label{tgcur_i}
\cX&\approx &\cX(:,{\bf p},:)*\cU_1*\cX({\bf s}_1,:,:),\\\label{tgcur_ii}
\cY&\approx &\cY(:,{\bf p},:)*\cU_2*\cY({\bf s}_2,:,:).
\end{eqnarray}
The GTCUR approximation for tensor pairs indeed computes the TCUR approximations of two given data tensors $\cX$ and $\cY$ and is motivated by the generalized t-SVD (GTSVD). For given data tensors $\cX\in\mathbb{R}^{I_1\times I_2 \times I_3}$ and $\cY\in\mathbb{R}^{I_4\times I_2\times I_3}$, the GTSVD approximation  guarantees the following decompositions \cite{zhang2021cs}
\begin{eqnarray}\label{GTSVD_F}
\underline{\bf X}&=&\underline{\bf U}*\underline{\bf C}*\underline{\bf Z}^{T},\\\label{GTSVD_F2}
\underline{\bf Y}&=&\underline{\bf V}*\underline{\bf S}*\underline{\bf Z}^{T},
\end{eqnarray}
where $\underline{\bf U}\in\mathbb{R}^{I_1\times I_1\times I_3 },\,\underline{\bf V}\in\mathbb{R}^{I_4 \times I_4\times I_3 },\,\underline{\bf C}\in\mathbb{R}^{I_1\times I_2\times I_3 },\,\underline{\bf S}\in\mathbb{R}^{I_4\times I_2\times I_3 },\,\underline{\bf Z}\in\mathbb{R}^{I_2\times I_2\times I_3}$. Note that the tensors $\underline{\bf C}$ and $\underline{\bf S}$ are f-diagonal, and the tensors $\underline{\bf U}$ and $\underline{\bf V}$ are orthogonal, while the tensor $\underline{\bf Z}$ is nonsingular. The process of computing GTSVD is presented in Algorithm \ref{ALG:GTSVD}. We need to apply the classical GSVD (lines 3-5) to the first $\lceil \frac{I_3+1}{2}\rceil$ frontal slices of the tensors $\underline{\bf X}$ and $\underline{\bf Y}$ in the Fourier domain and the rest of the slices are computed easily (Lines 6-12). 
\RestyleAlgo{ruled}
\LinesNumbered
\begin{algorithm}
\SetKwInOut{Input}{Input}
\SetKwInOut{Output}{Output}\Input{The data tensors $\underline{\mathbf X} \in {\mathbb{R}^{{I_1} \times {I_2} \times {I_3}}}$ and $\underline{\mathbf Y} \in {\mathbb{R}^{{I_4} \times {I_2} \times {I_3}}}$} 
\Output{The generalized t-SVD of $\underline{\mathbf X}$ and $\underline{\bf Y}$ as $\underline{\mathbf X}=\underline{\mathbf U}*\underline{\mathbf C}*\underline{\mathbf Z}$ and $\underline{\mathbf Y}=\underline{\mathbf V}*\underline{\mathbf S}*\underline{\mathbf Z}$}
\caption{Generalized t-SVD of $\underline{\bf X}$ and $\underline{\bf Y}$}\label{ALG:GTSVD}
      {
      $\widehat{\underline{\mathbf X}} = {\rm fft}\left( {\underline{\mathbf X},[],3} \right)$;\\
        $\widehat{\underline{\mathbf Y}} = {\rm fft}\left( {\underline{\mathbf Y},[],3} \right)$;\\
\For{$i=1,2,\ldots,\lceil \frac{I_3+1}{2}\rceil$}
{                        
$[\widehat{\underline{\mathbf U}}_i,\,\widehat{\underline{\mathbf V}}_i,\,\widehat{\underline{\mathbf Z}}_i,\,\widehat{\underline{\mathbf C}}_i,\,\widehat{\underline{\mathbf S}}_i]= {\rm GSVD}\,(\widehat{\underline{\X}}(:,:,i),\widehat{\underline{\Y}}(:,:,i))$;\\
}
\For{$i=\lceil\frac{I_3+1}{2}\rceil+1,\ldots,I_3$}{
$\widehat{\underline{\mathbf U}}_i={\rm conj}(\widehat{\underline{\mathbf U}}_{I_3-i+2})$;\\
$\widehat{\underline{\mathbf V}}_i={\rm conj}({\underline{\mathbf V}}_{I_3-i+2})$;\\
$\widehat{\underline{\mathbf Z}}_i={\rm conj}(\widehat{\underline{\mathbf Z}}_{I_3-i+2})$;\\
$\widehat{\underline{\mathbf C}}_i=\widehat{\underline{\mathbf C}}_{I_3-i+2}$;\\
$\widehat{\underline{\mathbf S}}_i=\widehat{\underline{\mathbf S}}_{I_3-i+2}$;
}
${\underline{\mathbf U}} = {\rm ifft}\left( {\widehat{\underline{\mathbf U}},[],3} \right)$;
$\,{\underline{\mathbf V}} = {\rm ifft}\left( {\widehat{\underline{\mathbf V}},[],3} \right)$; 
$\,{\underline{\mathbf Z}} = {\rm ifft}\left( {\widehat{\underline{\mathbf Z}},[],3} \right)$;
$\,{\underline{\mathbf C}} = {\rm ifft}\left( {\widehat{\underline{\mathbf C}},[],3} \right)$; 
$\,{\underline{\mathbf S}} = {\rm ifft}\left( {\widehat{\underline{\mathbf S}},[],3} \right)$; 
       	}       	
\end{algorithm}
We see that the GTSVD provides a common right tensor $\cZ$ in \eqref{GTSVD_F}-\eqref{GTSVD_F2} and we can use it to sample lateral slices of the data tensors $\cX$ and $\cY$ based on the TDEIM algorithm. As a result, the same indices can be used to sample the lateral slices for the data tensors $\cX$ and $\cY$. The horizontal slices of the data tensors $\cX$ and $\cY$ can also be sampled using the left tensor parts $\cU$ and $\cV$, although they do not necessarily provide identical horizontal slice indices. Following this idea, we can compute the GTSVD of the tensors $\cX,\,\cY$ and by applying the TDEIM to the shared tensor factors $\cU,\,\cV,\,\cZ$, we can choose indices for the horizontal and lateral slices of the specified data tensors. This approach is summarized in Algorithm \ref{ALG:DEIM_genten}. In Line 1 of Algorithm \ref{ALG:DEIM_genten}, 
we need to calculate the GTSVD of two tensors, which can be quite challenging for large-scale tensors. However, to address this issue, the randomized algorithms proposed in \cite{ahmadi2025randomized} can be used. Note that Lines 10-11 in Algorithm \ref{ALG:DEIM_genten} can be efficiently computed, and this is outlined in Algorithm \ref{ALG:fast}. 
\RestyleAlgo{ruled}  
\LinesNumbered
\begin{algorithm}
\SetKwInOut{Input}{Input}
\SetKwInOut{Output}{Output}\Input{Two data tensors $\cX\in\mathbb{R}^{I_1\times I_2\times I_3}$ and $\cY\in\mathbb{R}^{I_4\times I_2\times I_3}$ where $I_1\geq I_2,\,I_4\geq I_2$ and a target tubal rank $R$} 
\Output{A rank-$R$ GTCUR approximation for tensor pairs $(\cX,\cY)$
\begin{eqnarray*}
\cX&\approx& \cX(:,{\bf p},:)*\underline{\U}_{1}*\cX({\bf s}_1,:,:),\\\quad\cY&\approx&\cY(:,{\bf p},:)*\underline{\U}_{2}*\cY({\bf s}_2,:,:)
\end{eqnarray*}
}
\caption{The GTCUR approximation for tensor pairs}\label{ALG:DEIM_genten}
      {
      $[\cU,\cV,\cZ]={\rm GTSVD}(\cX,\cY)$;\hspace{.2cm} (or randomized GTSVD \cite{ahmadi2025randomized}) \\
\For{$j=1,2,\ldots,R$}{
${\bf p}(j)={\arg\max}_{1\leq i\leq I_2}\|\cZ(i,j,:)\|$\\
${\bf s}_1(j)=\arg\max_{1\leq i\leq I_1}\|\cU(i,j,:)\|$\\
${\bf s}_2(j)=\arg\max_{1\leq i\leq I_4}\|\cV(i,j,:)\|$\\
$\cZ(:,j+1,:)=\cZ(:,j+1,:)-\cZ(:,1:j,:)*\cZ({\bf p},1:j,:)^{-1}*\cZ({\bf p},j+1,:)$\\
${\smaller \cU(:,j+1,:)=\cU(:,j+1,:)-\cU(:,1:j,:)*
\cU({\bf s}_1,1:j,:)^{-1}*\cU({\bf s}_1,j+1,:)}$\\
${\smaller \cV(:,j+1,:)=\cV(:,j+1,:)-\cV(:,1:j,:)*\cV({\bf s}_2,1:j,:)^{-1}*\cV({\bf s}_2,j+1,:)}$\\
} 
${\cU}_{1}=\cX(:,{\bf p},:)^{\dag}*\cX*\cX({\bf s}_1,:,:)^{\dag}$\\
${\cU}_{2}=\cY(:,{\bf p},:)^{\dag}*\cY*\cY({\bf s}_2,:,:)^{\dag}$
       	}       	
\end{algorithm}

\RestyleAlgo{ruled}
\LinesNumbered
\begin{algorithm}
\SetKwInOut{Input}{Input}
\SetKwInOut{Output}{Output}\Input{Two data tensors $\cX\in\mathbb{R}^{I_1\times I_2\times I_3}$ and $\cY\in\mathbb{R}^{I_4\times I_2\times I_3}$ where $(I_1\leq I_2,\,I_4\leq I_2)$, and the index sets ${\bf p},\,{\bf s}_1,\,{\bf s}_2$} 
\Output{Computation of the middle tensors in Lines 10-11 of Algorithm \ref{ALG:DEIM_genten} as 
\begin{eqnarray*}
{\cU}^{(1)}=\cX(:,{\bf p},:)^{\dag}*\cX*\cX({\bf s}_1,:,:)^{\dag}\\
{\cU}^{(2)}=\cY(:,{\bf p},:)^{\dag}*\cY*\cY({\bf s}_2,:,:)^{\dag}
\end{eqnarray*}
}
\caption{Fast computations of Lines 10-11 in Algorithm \ref{ALG:DEIM_genten} }\label{ALG:fast}
      {
            $\widehat{\underline{\mathbf X}} = {\rm fft}\left( {\underline{\mathbf X},[],3} \right)$;\\
            $\widehat{\underline{\mathbf Y}} = {\rm fft}\left( {\underline{\mathbf Y},[],3} \right)$;\\
            ${\underline{\mathbf G}}_1 =\widehat{\underline{\mathbf X}}(:,{\bf p},:) $;\\
${\underline{\mathbf G}}_2 =\widehat{\underline{\mathbf Y}}(:,{\bf p},:) $;\\
${\underline{\mathbf G}}_3=\widehat{\underline{\mathbf X}}({\bf s}_1,:,:) $;\\
${\underline{\mathbf G}}_4=\widehat{\underline{\mathbf Y}}({\bf s}_2,:,:) $;\\
\For{$i=1,2,\ldots,\lceil \frac{I_3+1}{2}\rceil$}
{                        
${\underline{\mathbf C}}_1\left( {:,:,i} \right) = \underline{\bf G}_1(:,:,i)\backslash (\widehat{\X}(:,:,i)/\underline{\bf G}_3({\bf s}_1,:,i))$;\\
${\underline{\mathbf C}}_2\left( {:,:,i} \right) = \underline{\bf G}_2(:,:,i)\backslash (\widehat{\X}(:,:,i)/\underline{\bf G}_4({\bf s}_2,:,i))$;\\
}
\For{$i=\lceil\frac{I_3+1}{2}\rceil+1,\ldots,I_3$}{
${\underline{\mathbf C}}_1\left( {:,:,i} \right)={\rm conj}({\underline{\mathbf C}}_1\left( {:,:,I_3-i+2} \right))$;\\
${\underline{\mathbf C}}_2\left( {:,:,i} \right)={\rm conj}({\underline{\mathbf C}}_2\left( {:,:,I_3-i+2} \right))$;
}
$\underline{\mathbf U}^{(1)} = {\rm ifft}\left( {{\underline{\mathbf C}}_1,[],3} \right)$;\\
$\underline{\mathbf U}^{(2)} = {\rm ifft}\left( {{\underline{\mathbf C}}_2,[],3} \right)$;
       	}       	
\end{algorithm}
In the subsequent discussion, and akin to \cite{gidisu2022generalized}, we establish the relationship between the GTCUR approximations of the tensor pairs $(\cX,\cY)$ and the TCUR approximation of the tensors $\cX*\cY^{-1}$ and $\cX*\cY^{\dag}$  in Theorem \ref{thm1}.

\begin{thm}\label{thm1}
Let two data tensors $\cX\in\mathbb{R}^{I_1\times I_2\times I_3}$ and $\cY\in\mathbb{R}^{I_4\times I_2 \times I_3}$ be given, then
\begin{itemize}
    \item a) If $\cY$ is invertible, then the selected lateral and horizontal slice indices obtained based on the TCUR approximation of the data tensor $\cX*\cY^{-1}$ are identical to the index vectors ${\bf s}_1$ and ${\bf s}_2$ computed from the GTCUR approximation of the tensor pairs $(\cX,\cY)$.

    \item b) As a special case, if $\cY=\underline{\bf I}$, then the GTCUR approximation of the tensor pairs $(\cX,\cY)$ is the same as the TCUR approximation of the data tensor $\cX$.

    \item c) If $\cY$ is noninvertible but of full tubal rank, then the selected lateral and horizontal slice indices obtained via the TCUR approximation of the data tensor $\cX*\cY^{\dag}$ are identical to the index vectors ${\bf s}_1$ and ${\bf s}_2$ computed via the GTCUR approximation of the tensor pairs $(\cX,\cY)$.
\end{itemize}

\end{thm}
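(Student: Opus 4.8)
The plan is to reduce all three statements to the single-tensor TCUR theory of Theorem~\ref{thm_tub} by exhibiting, for the relevant product tensor, an explicit t-SVD whose left and right orthogonal factors are exactly the GTSVD factors $\cU$ and $\cV$ that Algorithm~\ref{ALG:DEIM_genten} feeds to the TDEIM in Lines~4--5. The common algebraic observation is that the shared right factor $\cZ$ cancels in the product. For part (a), with $\cY$ invertible we have $\cY^{-1}=(\cZ^T)^{-1}*\cS^{-1}*\cV^T$, and using $\cV^T*\cV=\cI$ together with the nonsingularity of $\cZ$,
\[
\cX*\cY^{-1}=\cU*\cC*\cZ^T*(\cZ^T)^{-1}*\cS^{-1}*\cV^T=\cU*(\cC*\cS^{-1})*\cV^T .
\]
The middle tensor $\cC*\cS^{-1}$ is f-diagonal, and in the Fourier domain its diagonal is $\mathrm{diag}(\alpha_j^{(i)}/\beta_j^{(i)})$, the nonnegative singular values of $\widehat{\cX}_i\widehat{\cY}_i^{-1}$; moreover the GSVD ordering (ratios $\beta_i/\alpha_i$ nondecreasing) coincides with the decreasing singular-value ordering of the t-SVD. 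Hence the displayed factorization is, frontal slice by frontal slice, the genuine t-SVD of $\cX*\cY^{-1}$ with bases $\cU,\cV$. Since the TDEIM of Algorithm~\ref{ALG:DEIM_ten} depends only on the basis tensors and processes their lateral slices in the fixed order $j=1,\dots,R$, the TCUR of $\cX*\cY^{-1}$ selects its horizontal indices from $\cU$ and its lateral indices from $\cV$, i.e.\ exactly ${\bf s}_1$ and ${\bf s}_2$; this proves (a). Part (c) is identical after replacing $\cY^{-1}$ by $\cY^{\dag}=(\cZ^T)^{-1}*\cS^{\dag}*\cV^T$ (valid since $\cZ$ is nonsingular and $\cV$ orthogonal), which gives $\cX*\cY^{\dag}=\cU*(\cC*\cS^{\dag})*\cV^T$ with the same orthogonal bases.

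For part (b) I would specialize $\cY=\cI$. Because $\widehat{\cI}_i$ is the identity in every frontal slice, $\cY^{-1}=\cI$ and $\cX*\cY^{-1}=\cX$, so by part (a) the horizontal sampling from $\cU$ reproduces the TCUR horizontal indices of $\cX$ and equals ${\bf s}_1$. The only nontrivial point is the lateral direction: the GTCUR draws its lateral indices from $\cZ$ (Line~3 of Algorithm~\ref{ALG:DEIM_genten}), whereas the standalone TCUR of $\cX$ draws them from the right t-SVD factor of $\cX$, which here is $\cV$. From $\cI=\cV*\cS*\cZ^T$ one obtains $\cZ=\cV*\cS^{-1}$, exhibiting $\cZ$ as a right f-diagonal rescaling of $\cV$, so (b) reduces to showing that the TDEIM index selection is invariant under right multiplication by a nonsingular f-diagonal tensor. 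The deflation identity sets this up cleanly: under $\cV\mapsto\cV*\cS^{-1}$ the interpolatory projector is unchanged and the residual simply rescales as $\cR(:,j,:)\mapsto\cR(:,j,:)*\cS^{-1}(j,j,:)$, so the question becomes whether the greedy $\arg\max$ over tube norms survives this rescaling.

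The hard part is precisely this invariance. In the matrix case of \cite{gidisu2022generalized} the corresponding rescaling is by nonzero scalars, under which the DEIM $\arg\max$ is trivially unaffected; here the rescaling acts through the t-product by a tube, i.e.\ a frequency-dependent diagonal in the Fourier domain, and $\arg\max_i\|\cR(i,j,:)*d\|$ need not coincide with $\arg\max_i\|\cR(i,j,:)\|$ unless $\widehat{d}$ is constant across frequencies (the tube $d$ acting as a scalar multiple of the identity tube). I would therefore close part (b) either under the natural hypothesis that this tubal scaling is scalar --- which holds, for instance, when the frontal slices of $\widehat{\cX}$ share a common singular-value profile, forcing each $\cS^{-1}(j,j,:)$ to have a flat Fourier transform --- or, unconditionally, in the weaker form already guaranteed by part (a): the pair-specific index vectors ${\bf s}_1,{\bf s}_2$ collapse to the single-tensor TCUR indices of $\cX$.
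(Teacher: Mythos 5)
Your treatment of parts (a) and (c) follows the paper's own proof essentially verbatim: both arguments cancel the shared GTSVD factor $\cZ^T$ to obtain $\cX*\cY^{-1}=\cU*(\cC*\cS^{-1})*\cV^T$ (resp.\ $\cX*\cY^{\dag}=\cU*(\cC*\cS^{\dag})*\cV^T$) and then observe that the TCUR of the product and the GTCUR of the pair feed the same bases $\cU$ and $\cV$ to the TDEIM, so the index vectors ${\bf s}_1$ and ${\bf s}_2$ must agree. Your added remarks on the f-diagonal structure of $\cC*\cS^{-1}$ and on the compatibility of the GSVD ordering with the t-SVD ordering only make explicit what the paper leaves implicit; they strengthen rather than change the argument.

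The divergence is in part (b). The paper dispatches it in one line by deriving $\cX=\cU*(\cC*\cS^{-1})*\cV^T$ and appealing to ``the same explanations as part a,'' but, as you correctly observe, the GTCUR of the pair draws its lateral index set ${\bf p}$ from $\cZ$ (Line 3 of Algorithm~\ref{ALG:DEIM_genten}), whereas the standalone TCUR of $\cX$ draws its lateral indices from $\cV$. Since $\cI=\cV*\cS*\cZ^T$ gives $\cZ=\cV*\cS^{-T}$, the two bases differ by a right f-diagonal (tubal) scaling, and the greedy $\arg\max$ over tube norms in the TDEIM is not invariant under such a scaling: the interpolatory projector is unchanged, but the residual tube becomes $\cR(i,j,:)*d$ with $d=\cS^{-T}(j,j,:)$, whose norm weights the Fourier coefficients of $\cR(i,j,:)$ by $|\widehat{d}|$, so the maximizing $i$ can move unless $|\widehat{d}|$ is constant across frequencies. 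This is a real gap in the paper's proof of (b) as well as an obstacle for your own; in the matrix setting of \cite{gidisu2022generalized} the analogous scaling is by nonzero scalars, so the issue simply does not arise there. Your two proposed repairs --- imposing a hypothesis that forces the tubes of $\cS^{-1}$ to have flat Fourier transforms, or retreating to the weaker conclusion that ${\bf s}_1,{\bf s}_2$ coincide with the TCUR indices of $\cX$ (which is exactly part (a) specialized to $\cY=\cI$) --- are both sound. In short: if (b) is read as a statement only about ${\bf s}_1$ and ${\bf s}_2$, your proof covers it; if it asserts the full identity of the two approximations including the lateral index set ${\bf p}$, then neither your argument nor the paper's establishes it unconditionally, and you deserve credit for being the one to notice.
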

\begin{proof}
a) Let the GTSVD of the tensor pairs $(\cX,\cY)$ be 
\begin{eqnarray}\label{gtscur_1}
\underline{\bf X}&=&\underline{\bf U}*\underline{\bf C}*\underline{\bf Z}^{T},\\\label{gtscur_2}
\underline{\bf Y}&=&\underline{\bf V}*\underline{\bf S}*\underline{\bf Z}^{T},
\end{eqnarray}
so, $\cX*\cY^{-1}=\cU*(\underline{\bf C}*\underline{\bf S}^{-1})*\underline{\bf V}^T$ because $\cY^{-1}=\cZ^{-T}*\cS^{-1}*\cV^{T}$. This means that the indices of the selected lateral and horizontal slices obtained via the GTCUR approximation of the tensor pairs $(\cX,\cY)$ are the same as those obtained from the TCUR approximation of the tensor $\cX*\cY^{-1}$ as both algorithms use the data tensors $\cU$ and $\cV$ to select the horizontal slices and lateral slices, respectively. \\

b) If $\cY=\underline{\bf I}$, then from \eqref{gtscur_2}, we have $\cZ^T=\underline{\bf S}^{-1}*\cV^T$ and substituting it in \eqref{gtscur_1}, we arrive at $\cX=\cU*(\underline{\bf C}*\underline{\bf S}^{-1})*\underline{\bf V}^T$. Considering the same explanations mentioned for part a, the proof of this part is completed. \\

c) Since the data tensor $\cY$ is of full tubal rank, its Moore-Penrose pseudoinverse can be represented as $\cY^{\dag}=\cZ^{-T}*\underline{\bf S}^{-1}*\cV^T$. So, the result is achieved using the same explanation as parts a and b.  
\end{proof}

\section{Tensor CUR approximation for tensor triplets based on the t-product}\label{Sec:PTACT}

\begin{figure}
\centering    \includegraphics[width=.8\linewidth]{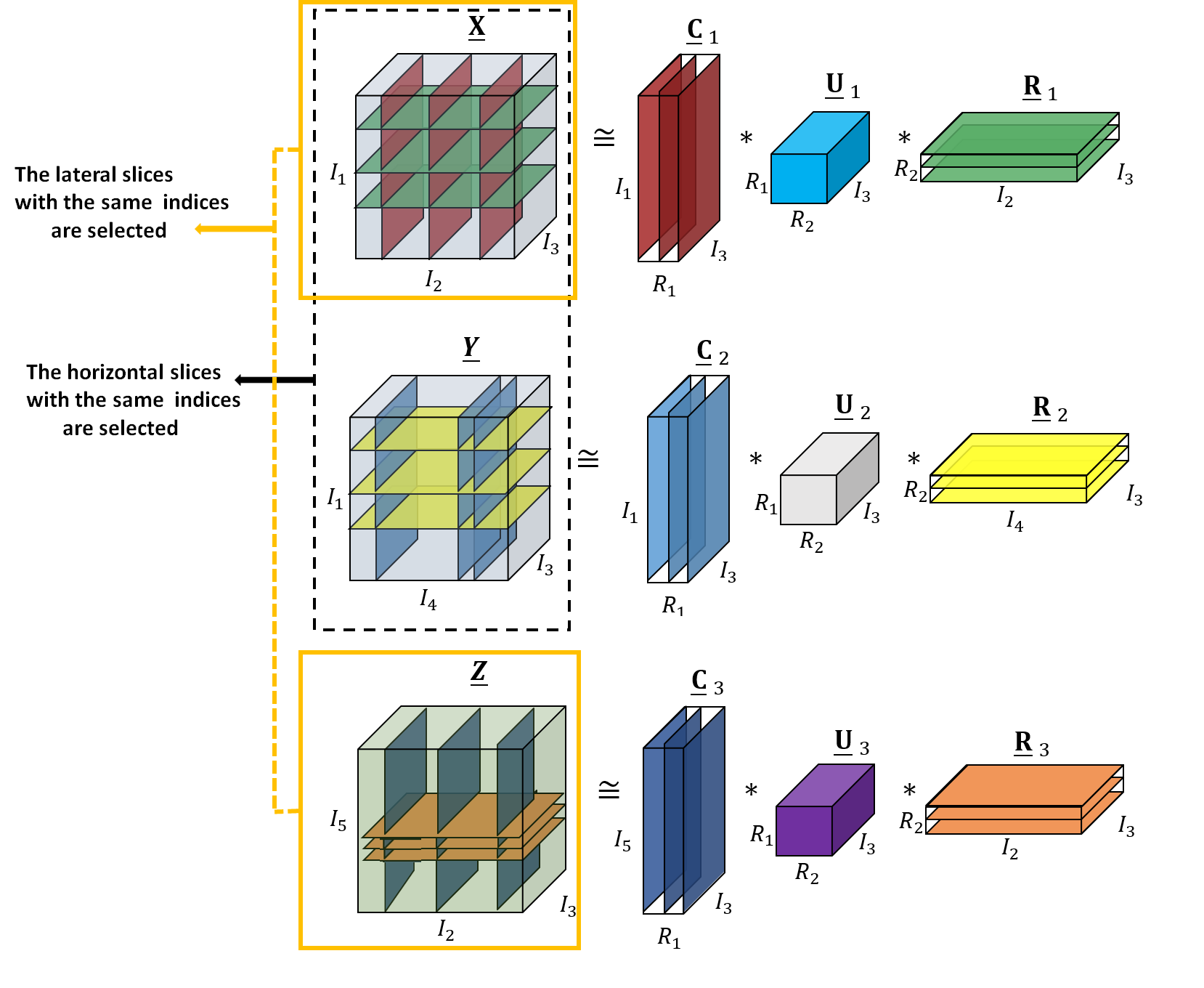}
    \caption{Visualization of the generalized tensor CUR (GTCUR) approximation for tensor triples $(\cX,\cY,\cZ)$. The indices of the selected horizontal slices of the tensors $\cX$ and $\cY$ are identical, but not those for the lateral slices. Also, the indices of the selected lateral slices of $\cX$ and $\cZ$ are the same, but not those for the horizontal slices.}\label{fig:CPj}
\end{figure}
The MCUR approximation for matrix triplets was recently introduced in \cite{gidisu2022restricted} as an extension of the MCUR concept. The approach involves utilizing the restricted SVD (RSVD) \cite{zha1991restricted} to choose the indices of both columns and rows. The RSVD decomposes a matrix in relation to two other matrices, effectively operating on three matrices at once. This concept can be similarly extended to tensors using the t-SVD, which we refer to as the GTCUR approximation for tensor triplets. Below, we provide the formal definition of this GTCUR approximation.

\begin{defn}
Let $\cX\in\mathbb{R}^{I_1\times I_2\times I_3}$, $\cY\in\mathbb{R}^{I_1\times I_4\times I_3}$, and $\cY\in\mathbb{R}^{I_5\times I_2\times I_3}$ be given. The GTCUR approximation of the tensor triplets $(\cX,\cY,\cZ)$ with the tubal rank $R$ is defined as
\begin{eqnarray}\label{e_1}
\cX &\approx &{\underline{\bf C}}_1*\cU_1*\cR_1 =(\cX*\cP)*\cU_1*({\cS}^T *\cX), \\
\cY &\approx &{\underline{\bf C}}_2*\cU_2*\cR_2 =(\cY*\widehat{\cP})*\cU_2 *({\cS}^T *\cY),\\\label{e_3}
\cZ &\approx&{\underline{\bf C}}_3*\cU_3*\cR_3 =(\cZ*\cP)*\cU_3 *(\widehat{\cS}^T *\cZ),
\end{eqnarray}
where $\underline{\bf S}\in\mathbb{R}^{I_1\times R\times I_3},\,\widehat{\underline{\bf S}}\in\mathbb{R}^{I_1\times R\times I_3},\,\cP\in\mathbb{R}^{I_2\times R\times I_3}$ and $\widehat{\cP}\in\mathbb{R}^{I_4\times R\times I_3},\,R\leq\min(I_1,I_2,I_4,I_5)$ are the index selection tensors yielded by sampling some lateral/horizontal slices of the identity tensor according to the sampled lateral/horizontal slices, which are sampled from the data tensors $\cX,\cY,$ and $\cZ$. Alternative representations for \eqref{e_1}-\eqref{e_3} are
\begin{eqnarray*}
\cX&\approx& \cX(:,{\bf p},:)*\underline{\U}_{1}*\cX({\bf s},:,:),\\
\cY&\approx&\cY(:,{\bf p}_1,:)*\underline{\U}_{2}*\cY({\bf s},:,:),\\
\cZ&\approx&\cZ(:,{\bf p},:)*\underline{\U}_{3}*\cZ({\bf s}_1,:,:),
\end{eqnarray*}
where ${\bf p},\,{\bf s},\,{\bf s}_1,\,{\bf p}_1$
are the index sets corresponding to the  selection tensors $\underline{\bf S}\in\mathbb{R}^{I_1\times R\times I_3},\,\widehat{\underline{\bf S}}\in\mathbb{R}^{I_1\times R\times I_3},\,\cP\in\mathbb{R}^{I_2\times R\times I_3}$ and $\widehat{\cP}\in\mathbb{R}^{I_4\times R\times I_3}$.
\end{defn}
The intuition behind the GTCUR approximation for tensor triplets also comes from the tensor-based restricted t-SVD (t-RSVD) which is a direct generalization of the classical RSVD \cite{zha1991restricted} to tensors and can be stated as follows. Let $\cX\in\mathbb{R}^{I_1\times I_2\times I_3}$, $\cY\in\mathbb{R}^{I_1\times I_4\times I_3}$, and $\cZ\in\mathbb{R}^{I_5\times I_2\times I_3}$ be given. Generalizing the RSVD to tensors based on the t-product guarantees the following decompositions
\begin{equation}\label{rsvd}
\cX=\cL*\cD_1*\cW^T,\quad \cY=\cL*\cD_2*\cU^T,\quad \cZ=\cV*\cD_3*\cW^T,
\end{equation}
where $\cU\in\mathbb{R}^{I_2\times I_2\times I_3},\,\cV\in\mathbb{R}^{I_5\times I_5\times I_3}$ are orthogonal and $\cL\in\mathbb{R}^{I_1\times I_1\times I_3},\,\cW\in\mathbb{R}^{I_2\times I_2\times I_3}$ are nonsingular. Also, $\cD_1\in\mathbb{R}^{I_1\times I_2\times I_3},\,\cD_2\in\mathbb{R}^{I_1\times I_4\times I_3},\,\cD_3\in\mathbb{R}^{I_5\times I_2\times I_3}$ have quasi-diagonal frontal slices, which means that if we remove their zero columns and zero rows, they become diagonal. The computation of the RSVD of the matrices can be performed using a double GSVD as described in \cite{zha1991restricted,de1991restricted}. Considering this fact, the tensor RSVD (t-RSVD) can be computed through the RSVD of the frontal slices of the underlying data tensors in the Fourier domain. We can indeed modify Algorithm \ref{ALG:GTSVD} to be used for computing t-RSVD. More precisely, the fft of all three input tensors are computed, then the RSVD is applied for the corresponding frontal slices of the tensors in the Fourier domain. Finally, the inverse fft is applied to get the factor tensors.

The computational process for the GTCUR approximation of tensor triples is outlined in Algorithm \ref{ALG:RtCUR}. Lines 6-8 can be efficiently executed in the Fourier domain, and similar algorithms, akin to Algorithm  \ref{ALG:DEIM_genten}, can be devised for this computation. The t-RSVD of the tensor triples $(\cX,\cY,\cZ)$ yields the common tensor factors $\cL,\,\cW$, along with additional tensor factors $\cU$ and $\cV$. These bases facilitate the sampling of lateral and horizontal slices from the tensor triples$(\cX,\cY,\cZ)$. Since the common tensor factors $\cL$ and $\cW$ are available, the indices used to sample the lateral slices of the data tensors $\cX$ and $\cZ$ are the same, while the indices for selecting horizontal slices of tensors $\cX$ and $\cY$ are also identical. However, the tensor bases $\cU$ and $\cV$ are utilized to sample the lateral slice indices of the tensor $\cY$ and the horizontal slice indices of the tensor $\cZ$, respectively. This approach is visually represented in Figure \ref{fig:CPj}. Essentially, the goal is to compute the t-RSVD of the tensor triples$(\cX,\cY,\cZ)$ to identify the corresponding tensor factors $\cU,\,\cV,\,\cZ,$ and $\cW$, which are then employed to determine the indices for selecting horizontal and lateral slices using the TDEIM algorithm. It is important to note that computing the t-RSVD can be resource-intensive for large-scale data tensors; therefore, the fast randomized GSVD algorithm proposed in \cite{ahmadi2025randomized} can be utilized to perform the double GSVD necessary for calculating the t-RSVD of the tensor triples. The relationship between the GTCUR approximation for tensor triples and its counterparts for tensor pairs and TCUR approximations is discussed in the following theorem.

\begin{thm} Let $\cX\in\mathbb{R}^{I_1\times I_2\times I_3}$, $\cY\in\mathbb{R}^{I_1\times I_4\times I_3}$, and $\cZ\in\mathbb{R}^{I_5\times I_2\times I_3}$ be given, then

\begin{itemize}
\item a) If $\cY$ and $\cZ$ are nonsingular tensors, then the selected lateral and horizontal slice indices from the TCUR approximation of $\cY^{-1}*\cX*\cZ^{-1}$ are identical to the index vectors ${\bf p}_1$ and ${\bf s}_1$, respectively,  obtained from a GTCUR approximation of the tensor triples $(\cX,\cY,\cZ)$.

\item  b) In the particular case where $\cY = \underline{\I}$ and $\cZ = \underline{\I}$, the GTCUR approximation of $(\cX,\underline{\bf I},\underline{\bf I})$ coincides with the TCUR approximation of $\cX$.

\item c) For a special choice of $\cY=\underline{\bf I}$, a GTCUR approximation of $(\cX,\underline{\bf I},\cZ)$ coincides with the GTCUR approximation of $(\cX,\cZ)$. Similar results can be stated for $\cZ = \underline{\bf I}$.
\end{itemize}
\end{thm}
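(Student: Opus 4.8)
The plan is to mirror the proof of Theorem~\ref{thm1}, replacing the pair GTSVD by the t-RSVD \eqref{rsvd} of $(\cX,\cY,\cZ)$ and reading off which orthogonal factor the TDEIM acts upon in each decomposition. Throughout I would use that $\cU,\cV$ in \eqref{rsvd} are orthogonal, so $\cU^{-1}=\cU^{T}$ and $\cV^{-1}=\cV^{T}$, while $\cL,\cW$ are only nonsingular. For part (a), nonsingularity of $\cY,\cZ$ forces $I_1=I_4$ and $I_5=I_2$, and I first record
\begin{equation*}
\cY^{-1}=\cU*\cD_2^{-1}*\cL^{-1},\qquad \cZ^{-1}=\cW^{-T}*\cD_3^{-1}*\cV^{T}.
\end{equation*}
Substituting $\cX=\cL*\cD_1*\cW^{T}$ and cancelling the inner blocks $\cL^{-1}*\cL=\cI$ and $\cW^{T}*\cW^{-T}=\cI$ gives
\begin{equation*}
\cY^{-1}*\cX*\cZ^{-1}=\cU*\bigl(\cD_2^{-1}*\cD_1*\cD_3^{-1}\bigr)*\cV^{T}.
\end{equation*}
The middle tensor is again f-diagonal (in the Fourier domain the t-product of f-diagonal tensors is a slice-wise product of rectangular diagonal matrices), so this is a genuine t-SVD-type factorization whose orthogonal bases are exactly $\cU$ and $\cV$. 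Since the GTCUR of the triplet derives ${\bf p}_1$ from $\cU$ and ${\bf s}_1$ from $\cV$, and the TCUR of $\cY^{-1}*\cX*\cZ^{-1}$ feeds these same bases into the TDEIM, the selected index sets coincide with ${\bf p}_1$ and ${\bf s}_1$, proving (a).

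Part (b) I would obtain by specialising (a): with $\cY=\cZ=\cI$ the product collapses to $\cX$ itself, and the t-RSVD of $(\cX,\cI,\cI)$ degenerates to an ordinary t-SVD of $\cX$ through the admissible canonical choice $\cD_2=\cD_3=\cI$, $\cL=\cU$, $\cW=\cV$, which makes $\cL,\cW$ the orthogonal t-SVD factors. The TDEIM then selects the horizontal and lateral indices of $\cX$ from $\cL$ and $\cW$, i.e.\ exactly the TCUR of $\cX$; equivalently, (b) is the case $\cZ=\cI$ of (c) combined with Theorem~\ref{thm1}(b).

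For part (c) I set only $\cY=\cI$. The identity $\cI=\cL*\cD_2*\cU^{T}$ forces $\cL=\cU*\cD_2^{-1}$, and substituting into $\cX=\cL*\cD_1*\cW^{T}$ yields
\begin{equation*}
\cX=\cU*\bigl(\cD_2^{-1}*\cD_1\bigr)*\cW^{T},\qquad \cZ=\cV*\cD_3*\cW^{T},
\end{equation*}
which is precisely a GTSVD of the pair $(\cX,\cZ)$ with common right factor $\cW$ and left factors $\cU,\cV$. Hence the common lateral indices drawn from $\cW$ and the horizontal indices of $\cZ$ drawn from $\cV$ agree with those produced by the GTCUR of $(\cX,\cZ)$, and the symmetric statement for $\cZ=\cI$ follows by interchanging the roles of the left and right factors.

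The hard part will be the remaining match in (c): the triplet samples the horizontal slices of $\cX$ through $\cL=\cU*\cD_2^{-1}$, whereas the pair uses $\cU$, and the TDEIM is genuinely basis-dependent, so equality of column spaces does not suffice. I would check that right multiplication by the f-diagonal $\cD_2^{-1}$ leaves the interpolatory projector \eqref{proj} unchanged (the factor cancels as for the matrix DEIM); the subtle point is that, unlike the matrix case, each residual lateral slice is in addition t-multiplied by a tube, and this convolution reweights the Fourier frequencies and could in principle reorder the greedy $\arg\max$. I therefore expect to close the argument by invoking the canonical t-RSVD form $\cD_2=\cI$, $\cL=\cU$ available when $\cY=\cI$, so that the two left bases coincide literally rather than merely up to an f-diagonal factor.
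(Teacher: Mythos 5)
Your proposal follows essentially the same route as the paper: factor $\cY^{-1}*\cX*\cZ^{-1}$ through the t-RSVD \eqref{rsvd} for part (a), and substitute $\cY=\underline{\bf I}$ (and $\cZ=\underline{\bf I}$) into \eqref{rsvd} to reduce parts (b) and (c) to identifications of the bases fed to the TDEIM. You are in fact more careful than the paper on two points: the paper writes $\cZ^{-1}=\cW*\cD_3^{-1}*\cV^T$ where your $\cW^{-T}*\cD_3^{-1}*\cV^{T}$ is the correct expression (only $\cU,\cV$ are orthogonal while $\cW$ is merely nonsingular), and the basis-dependence of the TDEIM under the residual f-diagonal factor $\cD_2^{-1}$ in part (c) --- which you rightly flag as the delicate step, since t-multiplication by a tube reweights Fourier frequencies and can in principle reorder the greedy $\arg\max$ --- is passed over in silence in the paper's proof.
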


\begin{proof}
a) From \ref{rsvd}, it is not difficult to see that $\cY^{-1}=\cU*{\cD}_2^{-1}*\cL^{-1},\,\cZ^{-1}=\cW*\cD_3^{-1}*\cV^T$, so we have 
\begin{eqnarray}
\nonumber
\cY^{-1}*\cX*\cZ^{-1}=(\cU*{\cD}_2^{-1}*\cL^{-1})*(\cL*\cD_1*\cW^T)*(\cW*\cD_3^{-1}*\cV^T)=\\\label{prf}
\cU*(\cD_2^{-1}\cD_1\cD_3^{-1})*\cV^T.
\end{eqnarray}
This means that the selected lateral and horizontal slice indices of $\cY^{-1}*\cX*\cZ^{-1}$ using the TCUR approximation are the same as the index vectors ${\bf p}_1$ and ${\bf s}_1$, respectively, obtained from a GTCUR approximation of $(\cX,\cY,\cZ).$\\

b) If $\cY=\underline{\I}$ and $\cZ=\underline{\I}$, then $\cL=\cU*{\cD}_2^{-1}$ and $\cW^T=\cD_3^{-1}*\cV^T$. Substituting them in the first part of \eqref{rsvd}, we have $\cX=\cU*(\cD_2^{-1}*\cD_1*\cD_3^{-1})*\cV^T$. This indicates that both algorithms utilize the same tensors $\cU$ and $\cV$  for selecting the horizontal and lateral slices, respectively. This completes the proof.

c) If $\cY=\underline{\I}$, then from the second part of \eqref{rsvd}, we have $\cL=\cU*\cD_2^{-1}$. If we substitute this in the first part of \eqref{rsvd}, we arrive at 
\begin{eqnarray}\label{rs_1}
\cX&=&\cU*(\cD_2^{-1}*\cD_1)*\cW^T,\\\label{rs_2}
\cZ&=&\cV*\cD_3*\cW^T.  
\end{eqnarray}
It is evident that equations \eqref{rs_1} and \eqref{rs_2} demonstrate that the GTCUR approximation of the tensor pairs $(\cX,\cZ)$ yields identical indices for both horizontal and lateral slice sampling as the GTCUR approximation of the tensor triples $(\cX,\underline{\bf I},\cZ)$. This completes the proof.

\end{proof}

\RestyleAlgo{ruled}
\LinesNumbered
\begin{algorithm}
\SetKwInOut{Input}{Input}
\SetKwInOut{Output}{Output}\Input{Three data tensors $\cX\in\mathbb{R}^{I_1\times I_2\times I_3},\,\cY\in\mathbb{R}^{I_1\times I_4\times I_3},$ and $\cZ\in\mathbb{R}^{I_5\times I_2\times I_3}$; and a target tubal rank $R$} 
\Output{A tubal rank-$R$ GTCUR approximation of tensor triplets $(\cX,\cY,\cZ)$ as 
\begin{eqnarray*}
\cX&\approx& \cX(:,{\bf p},:)*\underline{\U}_{1}*\cX({\bf s},:,:),\\
\cY&\approx&\cY(:,{\bf p}_1,:)*\underline{\U}_{2}*\cY({\bf s},:,:)\\
\cZ&\approx&\cZ(:,{\bf p},:)*\underline{\U}_{3}*\cZ({\bf s}_1,:,:)
\end{eqnarray*}
}
\caption{GTCUR approximation of  tensor triplets}\label{ALG:RtCUR}
      {
Compute the t-RSVD (either deterministic or randomized version) of $(\cX,\cY,\cZ)$ to obtain $\cW,\,\cZ,\cU,\cV$;\\
${\bf p}\leftarrow$ Apply Algorithm \ref{ALG:DEIM_ten} to $\cW$;\\
${\bf s}\leftarrow$ Apply Algorithm \ref{ALG:DEIM_ten} to $\cZ$;\\
${\bf p}_1\leftarrow$ Apply Algorithm \ref{ALG:DEIM_ten} to $\cU$;\\
${\bf s}_1\leftarrow$ Apply Algorithm \ref{ALG:DEIM_ten} to $\cV$;\\
${\cU}_{1}=\cX(:,{\bf p},:)^{\dag}*\cX*\cX({\bf s},:,:)^{\dag}$;\\
${\cU}_{2}=\cY(:,{\bf p}_1,:)^{\dag}*\cY*\cY({\bf s},:,:)^{\dag}$;\\
${\cU}_{3}=\cZ(:,{\bf p},:)^{\dag}*\cZ*\cZ({\bf s}_1,:,:)^{\dag}$;\\
       	}       	
\end{algorithm}

\section{Numerical results}\label{Sec:num}
This section demonstrates the effectiveness and practical applicability of the proposed generalized tensor CUR (GTCUR) algorithms for both tensor pairs and triplets. All simulations were implemented in MATLAB, utilizing functions from the Tensor-tensor-product Toolbox\footnote{\url{https://github.com/canyilu/Tensor-tensor-product-toolbox.}}.. The computations were performed on a laptop equipped with an Intel® Core™ i7-5600U processor (2.60 GHz) and 8GB RAM. We evaluate the algorithm performance using two key metrics: Relative error and Peak signal-to-noise ratio (PSNR).

The relative error between a reference tensor $\underline{\bf X}$ and its approximation $\underline{\bf Y}$ is defined as:
\begin{equation}
    \text{Relative Error} = \frac{\|\underline{\bf X} - \underline{\bf Y}\|_F}{\|\underline{\bf X}\|_F}.
\end{equation}
The PSNR for two images $\underline{\bf X}$ and $\underline{Y}$ is computed as:
\begin{equation}
    \text{PSNR}(\underline{\mathbf{X}},\underline{\mathbf{Y}}) = 10 \cdot \log_{10} \left( \frac{255^2}{\text{MSE}(\underline{\mathbf{X}},\underline{\mathbf{Y}})} \right),
\end{equation}
where the Mean Squared Error (MSE) is calculated by:
\begin{equation}
    \text{MSE}(\underline{\mathbf{X}},\underline{\mathbf{Y}}) = \frac{1}{N} \sum_{i=1}^N ({\bf x}_i - {\bf y}_i)^2,
\end{equation}
with $N$ being the total number of pixels and ${\bf x}_i$ and ${\bf y}_i$ representing the $i$-th elements of vectorized images ${\bf x}=\underline{\mathbf{X}}(:)$ and ${\bf y}=\underline{\mathbf{Y}}(:)$ respectively.
    
\begin{exa}\label{exa_1}
({\bf Random tensors})
In this experiment, we generate random third-order tensors with entries drawn independently and identically from a standard Gaussian distribution ($N(0,1)$). Specifically, we construct two random tensors of size $100\times 100\times 100$ with tubal rank 40 using the following procedure:
\begin{enumerate}
    \item Generate two random tensors:
    \begin{itemize}
        \item Tensor $\underline{\bf A}$: $100\times 40\times 100$.
        \item Tensor $\underline{\bf B}$: $40\times 100\times 100$.
    \end{itemize}
    \item Compute their t-product to obtain the final tensors: $\underline{\bf X} = \underline{\bf A}* \underline{\bf B}$.
\end{enumerate}
This construction method guarantees that the resulting tensors maintain the desired tubal rank of 40 while preserving the Gaussian distribution properties of their elements. We subsequently applied the GTCUR method to both data tensors, with the reconstruction results presented in Figure \ref{fig:gtcur_1}.

\begin{figure}
     \centering
     \subfigure[Relative error vs.\ tensor rank]{
         \centering
         \includegraphics[width=0.45\linewidth]{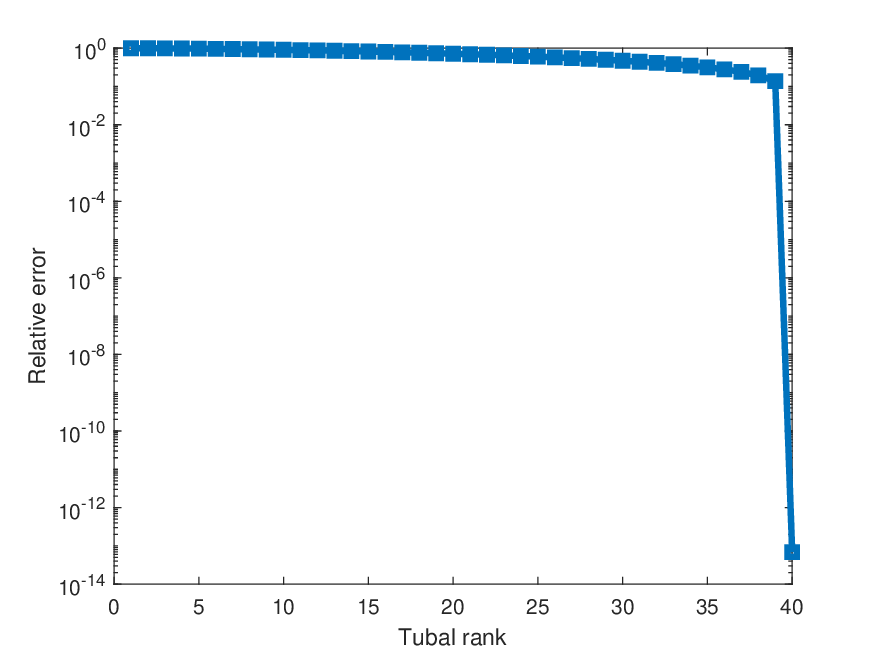}
     }
     \hfill
     \subfigure[Relative error vs.\ tensor rank]{
         \centering
         \includegraphics[width=.45\linewidth]{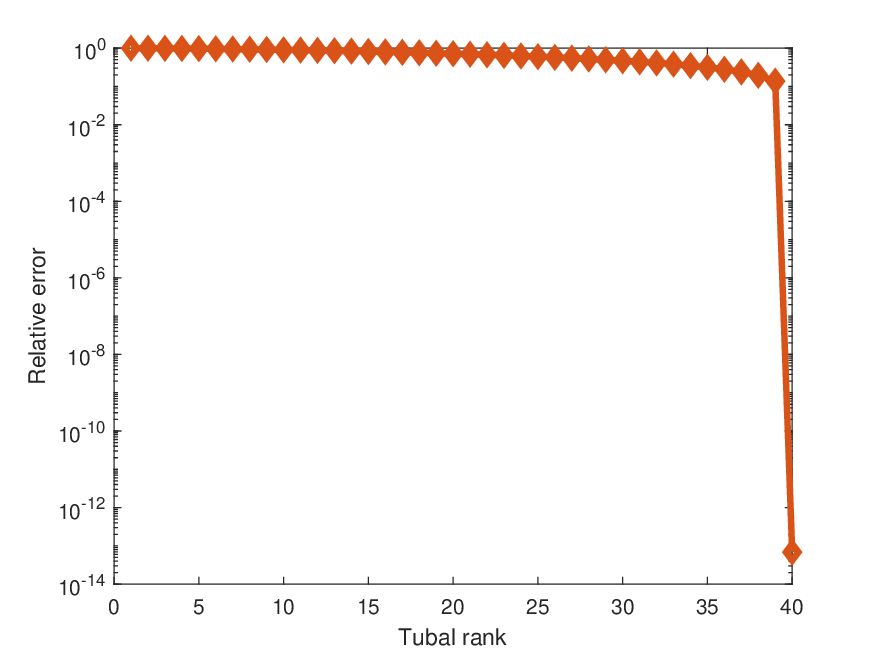}
     }
             \caption{The relative error history of GTCUR approximation for tensor pairs, left is for the tensor $\underline{\bf X}$ and right is for the tensor $\underline{\bf Y}$.}
        \label{fig:gtcur_1}
\end{figure}
To evaluate the algorithm for the tensor triples case, we generated three tensors $\underline{\bf X},\,\underline{\bf Y},\underline{\bf Z}$ of size $100\times 100\times 100$ and tubal rank $40$. The GTCUR approximation for tensor triplets of tubal rank $40$ was applied to the mentioned tensors. Like the case with tensor pairs, the GTCUR approximation for tensor triples achieves exact results once the number of sampled lateral or horizontal slices reaches 40. The experiments indicate that the GTCUR approximation is effective for both tensor pairs and tensor triples when applied to random data tensors.

\end{exa}

\begin{exa}\label{exa_2} ({\bf Image approximation})
In this example, we examine the GTCUR approximation for tensor pairs and tensor triplets. We begin with pairs of tensors, using the \textit{"Lena"} and \textit{"Peppers"} images (both of size $256\times 256\times 3$), see Figure \ref{fig:samples}. We apply the proposed GTCUR method (tensor pairs case) to sample lateral and horizontal slices. Notably, the indices of the selected lateral slices remain consistent (see Figure \ref{fig:pic2}, bottom). The reconstructed images, obtained using $R = 60$ lateral/horizontal slices, are displayed in Figure \ref{fig:pic2}. 

\begin{figure}
\centering    \includegraphics[width=.6\linewidth]{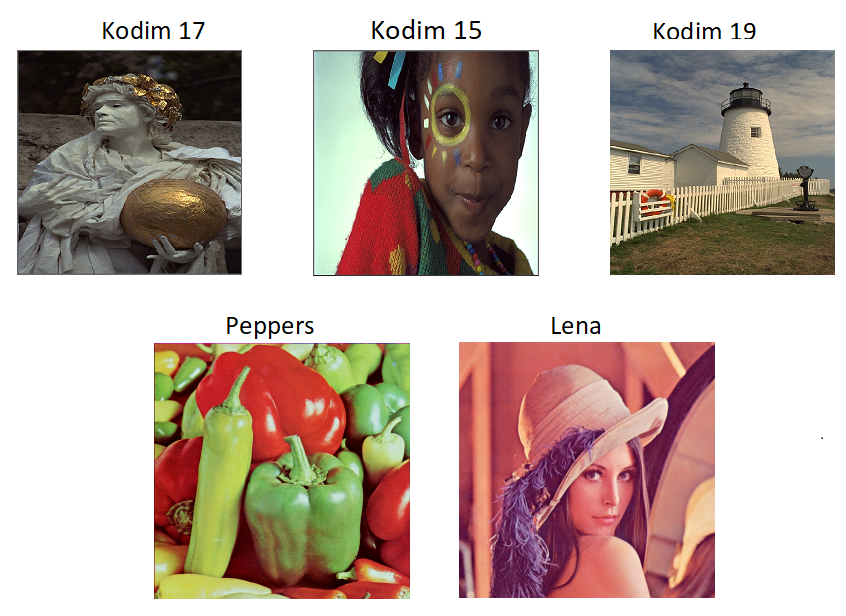}
    \caption{The sample images utilized in our calculations.}\label{fig:samples}
\end{figure}

Next, we evaluate the method on three images \textit{"kodim17"}, \textit{"kodim15"}, and \textit{"kodim19"} from the Kodak dataset\footnote{\url{https://r0k.us/graphics/kodak/}}, , see Figure \ref{fig:samples}.. Originally, \textit{"kodim17"}, and \textit{"kodim19"}   are of size $768\times 512\times 3$, while \textit{"kodim15"} is of size $512\times 768\times 3$. We resize them to $512\times 512\times 3$ for the computation to be manageable. We then applied the GTCUR method (tensor triplets case) under the same conditions (R = 120 slices). The reconstructed images and their corresponding PSNR values are shown in Figure \ref{fig:pic2} (upper). 

The results demonstrate that the GTCUR method delivers high-quality reconstructions for both image pairs and image triplets. 

\begin{figure}
\centering    \includegraphics[width=0.7\linewidth]{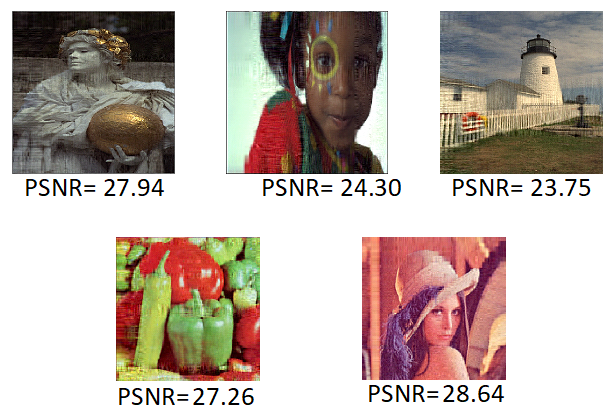}
    \caption{The images reconstructed by the proposed GTCUR method, for both image pairs and image triplets.}\label{fig:pic2}
\end{figure}
\end{exa}

\section{Conclusion}\label{Sec:Con}
In this note, we demonstrated how the tensor CUR (TCUR) approximation can be adapted for tensor pairs and tensor triplets. We utilize the tensor Discrete Interpolatory Empirical Method (TDEIM) to extend the TCUR approximation to these tensor configurations, resulting in what we call the generalized TCUR (GTCUR) method. We established links between certain specific cases of GTCUR and the traditional TCUR approximations.Additionally, we introduced efficient algorithms for computing the GTCUR approximation for both tensor pairs and tensor triplets. Our ongoing research focuses on the theoretical and numerical aspects of the GTCUR methods for tensor pairs and triplets in practical applications.

\section*{Acknowledgment}
The authors would like to thank the reviewer for constructive and useful comment, which improved the quality of the paper significantly. 

\section{Conflict of Interest Statement}
 The authors declare that they have no
 conflict of interest with anything.

\bibliographystyle{elsarticle-num} 
\bibliography{cas-refs}
\end{document}